\numberwithin{equation}{section}
\def\p{\partial}
\def\i{\sqrt{-1}}
\def\o{\omega}
\def\cC{{\cal C}}
\def\cH{{\cal H}}
\def\h{{\mathfrak h}}
\def\cA{{\mathcal A}}
\def\cC{{\mathcal C}}
\def\cB{{\mathcal B}}
\def\cF{{\mathcal F}}
\def\cH{{\mathcal H}}
\newtheorem{prop}{Proposition}[section]
\newtheorem{theo}[prop]{Theorem}
\newtheorem{lemma}[prop]{Lemma}
\newtheorem{cor}[prop]{Corollary}
\newtheorem{rem}[prop]{Remark}
\newtheorem{defi}[prop]{Definition}
\let\lra=\longrightarrow
\def\mapright\#1{\,\smash{\mathop{\lra}\limits^{\#1}}\,}
\begin{document}
\bibliographystyle{plain}
\title{The Calabi flow on K\"ahler Surfaces with bounded Sobolev constant (I)}
\author{X. X. Chen\footnote{The author is partially supported by NSF.}\; and \;
W. Y.  He\footnote{The author is partially supported by a PIMS postdoc fellowship.}}
\date{}
\maketitle
\tableofcontents

\section{Introduction}
In \cite{Calabi82}, E. Calabi proposed to deform a K\"ahler metric
in the direction of the Levi-Hessian of its scalar curvature. This is called the Calabi flow and it
is a fourth order fully nonlinear parabolic equation aiming to
attack the existence of constant scalar curvature metric (cscK
metric) in a fixed  K\"ahler class.  Note that a cscK metric is a
fixed point of the Calabi flow while an extremal K\"ahler (extK)
metric is a soliton-type solution.  A K\"ahler metric is called extremal if the
complex gradient vector of its scalar curvature is holomorphic.  In particular a K\"ahler-Einstein metric is an extK metric.
The existence of extK metric is one of the central problems in K\"ahler
geometry. The stability conjecture (Yau-Tian-Donaldson conjecture) in K\"ahler geometry asserts that the existence of extK metric  is equivalent to the stability of the underlying K\"ahler structure in geometric invariant theory. However, a cscK metric (or extK) satisfies a fourth order nonlinear partial differential equation. It is hard to attack its existence
problem directly. The study of the Calabi flow  seems to be an
effective method. In \cite{Dona02}, Donaldson described  the limit behavior of the Calabi flow in a conceptual level, relevant to solving the stability conjecture in K\"ahler geometry. The first named author conjectured earlier
\cite{chen05} that the flow exists globally for any smooth initial
K\"ahler metric. However there are rather few analytic techniques which can applied directly as the Calabi flow is a fourth order parabolic equation.
On Riemann surfaces, P. Chrusci\'el showed that the Calabi flow exists for all time and converges to a constant Gaussian curvature metric, making use of the existence of such a metric and  the Bondi mass in general relativity. Without using the Bondi mass, Chen \cite{chen01},  Struwe \cite{Struwe} gave a direct  proof of Chrusci\'el's theorem independently.  While in higher dimensions, the nonlinearity of the Calabi flow becomes more acute and the analytic difficulty becomes more daunting. But recently some progress has been made \cite{chen-he, tw, szekelyhidi}  etc.

In an earlier paper \cite{chen-he}, the authors proved that the main obstruction
to the global existence of the Calabi flow is
the bound of Ricci curvature. In particular the curvature blows up if the flow does not exist for all time. 
It is then natural to study the formation of singularities along the Calabi flow when the curvature blows up. We consider this problem by assuming that  the Sobolev constants of the evolved metrics along the Calabi flow are uniformly bounded. 
The first result of this paper is  
\begin{theo}\label{1} Let $(M, [\omega], J)$ be a compact K\"ahler
manifold.  If the Calabi flow on $(M, [\omega], J)$ exists for time
$t\in [0, T)$ with uniformly bounded Sobolev constants and the curvature tensors become unbounded when $t\rightarrow \infty$, then
there exists a sequence of points $(x_i, t_i) \in (M, [0, T))$
where $\displaystyle \lim_{i\rightarrow \infty} t_i = T\;$ and
$\displaystyle \lim_{i\rightarrow\infty} |Riem(x_i, t_i)|
=Q_i\rightarrow \infty$ such that the pointed manifolds \[(M,
x_i,Q_i g (t_i+t/Q_i^2), J)\] converge locally smoothly to an ancient solution of the Calabi flow
 \[(M_\infty, x_\infty, g_\infty(t), J_\infty),  t\in (-\infty,
0].\] In particular, if $(M, J)$ is a K\"ahler surface, for any $t\in (-\infty, 0]$, $g_\infty(t)\equiv g_\infty(0)$  and $(M_\infty, g_\infty(0), J_\infty)$ is a scalar flat ALE K\"ahler surface with finite total energy. 
\end{theo}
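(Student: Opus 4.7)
The plan is to combine a Hamilton-type point selection and rescaling argument with local smoothing estimates adapted to the fourth-order Calabi flow, using the uniform Sobolev bound as the critical functional-analytic input. Since $|\text{Riem}|$ blows up as $t \to T$, I would choose by a standard parabolic point-picking procedure a sequence $(x_i, t_i)$ with $t_i \to T$ and $Q_i := |\text{Riem}(x_i, t_i)| \to \infty$ such that $|\text{Riem}| \le 2Q_i$ on a parabolic neighborhood of $(x_i, t_i)$ whose rescaled size tends to infinity. The rescaled flow $g_i(t) := Q_i\, g(t_i + t/Q_i^2)$ then satisfies $|\text{Riem}_{g_i}|(x_i, 0) = 1$, is uniformly bounded by $2$ on intervals $[-T_i, 0]$ with $T_i \to \infty$, and is itself a Calabi flow by scale covariance of the equation.

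Extracting a smooth pointed limit reduces to verifying the hypotheses of a Hamilton-style compactness theorem for the rescaled flow: (a) a uniform lower bound on the injectivity radius at $x_i$, which follows from Cheeger's inequality using the bounded curvature together with the scale-invariant Sobolev constant; and (b) uniform $C^k$ bounds on $\text{Riem}_{g_i}$ on parabolic neighborhoods of $x_i$, established via Shi-type higher derivative estimates specific to the Calabi flow. Subsequential pointed smooth convergence then yields an ancient solution $(M_\infty, x_\infty, g_\infty(t), J_\infty)$ on $(-\infty, 0]$.

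In the K\"ahler surface case I would exploit three features of real dimension four. First, the Calabi energy $\int R^2\, dv$ is scale invariant and nonincreasing along the flow, so it descends to a finite value on $g_\infty$. Second, a parabolic Moser iteration applied to the evolution equation for $R$, using the $L^2$ bound on $R$ and the uniform Sobolev constant, yields a uniform $L^\infty$ bound on $R(g(t))$ along the original flow; rescaling then gives $|R(g_i(t))| = Q_i^{-1}|R(g(t_i + t/Q_i^2))| \to 0$ uniformly in $t$ on compact intervals, so $R(g_\infty(t)) \equiv 0$ on $(-\infty, 0]$, and the Calabi flow equation $\partial_t g = i\partial\bar\partial R$ forces $g_\infty(t) \equiv g_\infty(0)$. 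Third, $g_\infty(0)$ is a complete scalar-flat K\"ahler surface with bounded curvature and finite $L^2$ curvature (the latter obtained by combining the bounded Calabi energy with the K\"ahler decomposition $|W^+|^2 = R^2/24$ and Gauss-Bonnet-Chern); the structure theorem of Bando-Kasue-Nakajima, together with its K\"ahler refinements, then identifies it as ALE of finite total energy, provided Euclidean volume growth is confirmed.

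The principal analytic obstacle I anticipate is step (b): deriving Shi-type higher-order estimates for a fourth-order parabolic equation is algebraically delicate, since the evolution of $|\nabla^k \text{Riem}|^2$ contains cross terms of varying orders that must be absorbed using interpolation inequalities compatible with the Sobolev embedding, with the Sobolev constant appearing as the only geometric constant of the background. A secondary subtlety is ruling out collapsing at infinity and confirming the Euclidean volume growth needed before the ALE classification can be invoked.
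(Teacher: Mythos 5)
Your overall architecture (point-picking, rescaling, Hamilton--Cheeger--Gromov compactness with the injectivity radius bound coming from the scale-invariant Sobolev constant, and higher-order estimates from interpolation plus Sobolev embedding) matches the paper's proof of the first half of the theorem; the ``principal analytic obstacle'' you flag is exactly what the paper's Section 3 integral estimates together with Lemma 4.3 are designed to supply, so that part is sound in outline even though you leave it unexecuted.

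The genuine gap is in your treatment of the surface case. You propose to prove a uniform $L^\infty$ bound on $R$ along the original flow by parabolic Moser iteration and then conclude $R(g_i) = Q_i^{-1}R \to 0$. This step would fail: the scalar curvature evolves by a fourth-order equation under the Calabi flow, so there is no maximum principle and no workable Moser iteration giving a sup bound from the $L^2$ bound and the Sobolev constant; indeed the paper explicitly states that a scalar curvature bound along the Calabi flow ``is not known,'' and this is one of the main difficulties distinguishing it from the elliptic setting of Chen--LeBrun--Weber. The paper's actual route avoids any pointwise control of $R$: since the Calabi energy is scale-invariant in real dimension four and monotone along the flow, $\int_{t/Q_i^2+t_i}^{t_i}\int_M |R_{,\alpha\beta}|^2\,dg_i\,ds = \cC(t/Q_i^2+t_i)-\cC(t_i)\to 0$, which forces $\int_{M_\infty}|R_{,\alpha\beta}|^2 dg_\infty = 0$ at every time, i.e.\ the blow-up limit is extremal K\"ahler (hence static). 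ALE structure then comes from the Chen--Weber moduli theory for extremal metrics with bounded $L^2$ curvature and a Sobolev inequality, and only \emph{after} that does one conclude scalar flatness, via the separate lemma that an ALE extremal K\"ahler metric in complex dimension $>1$ has $\nabla R$ the real part of a holomorphic field which, by Hartogs extension and decay at infinity, must vanish. Your logical order (scalar flat first, then ALE via Bando--Kasue--Nakajima) therefore rests entirely on the unavailable $L^\infty$ bound; reversing the order as the paper does is what makes the argument close.
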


For simplicity,  we call $(M_\infty, g_\infty)$ a {\it maximal bubble} along the Calabi flow. Theorem \ref{1} asserts that a {\it maximal bubble} along the Calabi flow on a K\"ahler surface has to be scalar flat with asymptotically locally Euclidean (ALE) structure at infinity. This property provides strong restrictions on the singularities that might form along the Calabi flow. See Section 4 for more details. 

Scalar flat ALE K\"ahler surfaces are studied extensively in literature, for instance,
Kronheimer \cite{Kronheimer1},  Joyce \cite{joyce},
LeBrun\cite{lebrun01, lebrun02} and
Calderbank-Singer\cite{calsinger} etc. In particular, Kronheimer \cite{Kronheimer1, Kronheimer2} gave a complete classification in hyper-K\"ahler setting. In general the classification of scalar flat ALE K\"ahler surfaces seems to be hard.  But the first named author believes that a scalar flat ALE K\"ahler surface is uniquely determined by its underlying geometric structure.  However, a proof seems out of reach at this point.

Recently Chen-LeBrun-Weber  \cite{chen-lebrun-weber}  studied the existence
of extK metrics in some K\"ahler classes on
$M\sim\mathbb{CP}^2 \sharp 2 \overline{\mathbb{CP}^2};$ $M$ can be obtained by  $\mathbb{CP}^2$ blown up at two different points.  They constructed  an Einstein metric on $M\sim\mathbb{CP}^2 \sharp 2 \overline{\mathbb{CP}^2}$ which is conformal to an extK metric in a particular K\"ahler class. The strategy in \cite{chen-lebrun-weber}  is to use continuous deformation of extK metrics and a weak compactness theorem for extK metrics \cite{chen-weber}.  A possible bubble in the deformation turns out to be a scalar flat ALE K\"ahler surface. A key step is to eliminate the formation of such a bubble by using the smallness of the Calabi energy.  Theorem
\ref{1} makes it possible for us to adopt the strategy
in \cite{chen-lebrun-weber}, {\it albeit} new difficulty arises
in our case. For instance the bound of scalar curvature plays a key role in
\cite{chen-lebrun-weber} while along the Calabi flow, such a bound is not known. Also we need to bound the Sobolev constants along the Calabi flow. Note that this is a non-collapsing (or local non-collapsing) result. In general it is hard to obtain non-collapsing result (or local non-collapsing) along a geometric evolution equation, such as Perelman's non-local collapsing result along the Ricci flow \cite{perelman}. On K\"ahler surfaces with $c_1>0$, we can actually bound the Sobolev constants along the Calabi flow geometrically. Combining all these together, we can prove that the Calabi flow exists for all time and converges to an extK metric in many classes on toric surfaces with $c_1>0$.  For simplicity,  we only present a particular example of our results.

We consider $M\sim\mathbb{CP}^2 \sharp 3 \overline{\mathbb{CP}^2}$. $M$ can be obtained by $\mathbb{CP}^2$ blown up at three generic points. This is known as a del Pezzo surface with tori symmetry. Its automorphism group contains a compact two-torus $T^2$. A metric which is invariant under the action of $T^2$ is called a toric metric. Calabi showed that any extK metric admits the maximal symmetry that the manifold allows \cite{Calabi85}; namely an extK metric is invariant under the identity component of a maximal compact subgroup of the automorphism group of the manifold. 
This implies that any extK metric on $M$ has to be toric. Let $H$ be a hyperplane in $\mathbb{CP}^2$. After blown up, we still use $H$ to denote the corresponding hypersurface on $M$ and $E_i, i=1, 2, 3$ to denote the exceptional divisors.  For simplicity, we use $[H], [E_i]$ to denote the homology classes and their Poincar\'e dual-the cohomology classes.
A K\"ahler class on $M$ can be described as $[\omega]_{x, y, z}=3[H]-(x[E_1]+y[E_2]+z[E_3])$. When $x=y=z$,
the K\"ahler classes $[\omega]_x=3[H]-x([E_1]+[E_2]+[E_3])$ are invariant under the action of $\mathbb{Z}_3$, which acts on exceptional divisors as a cyclic group. Note that the positivity condition of a K\"ahler class requires that $x\in (0, 3/2)$ and  $M$ degenerates to $\mathbb{CP}^2$ when $x=0, 3/2$.  We will give a full detailed discussion for $x=1/2$. We can also prove similar results for any $x\in (0, 3/2)$ using the same method.  We will give a rough discussion and state our theorem for $x\in (0, 3/2)$ in appendix.

\begin{theo}\label{T-2}Let $[\omega]=3H-1/2([E_1]+[E_2]+[E_3])$ on $M$. Suppose the initial metric $\omega_0\in [\omega]$ is invariant under the toric action and the action of $\mathbb{Z}_3$ and
\[
\int_MR^2dg_0<32\pi^2(6+25/11)
\] 
then the Calabi flow exists for all time with uniformly bounded curvature tensor, which converges to a cscK metric in the class $[\omega]$ in the Cheeger-Gromov sense. 
\end{theo}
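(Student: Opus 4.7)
The plan is to adapt the elliptic strategy of \cite{chen-lebrun-weber} to the Calabi flow, using Theorem \ref{1} as the bubble-extraction tool. There are four steps: (i) establish a uniform Sobolev bound along the flow, (ii) apply Theorem \ref{1} to reduce a hypothetical singularity to a scalar flat ALE K\"ahler bubble, (iii) rule out such a bubble via the initial Calabi-energy hypothesis, and (iv) pass from bounded curvature to long-time existence and Cheeger--Gromov convergence to a cscK metric. Since $\omega_0$ is invariant under the toric $T^2$ and under $\mathbb{Z}_3$, and the Calabi flow preserves isometries of the initial data, every $g(t)$ is toric and $\mathbb{Z}_3$-invariant. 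Combined with $c_1(M)>0$ and the monotonic decay of the Calabi energy, these symmetries are leveraged to rule out volume collapse at scales controlled by the curvature and thereby produce a uniform bound on the Sobolev constants of $g(t)$ throughout the lifespan of the flow.

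Assume for contradiction that $|Riem|$ is not uniformly bounded on the maximal existence interval $[0,T)$. By the Sobolev bound just established, Theorem \ref{1} applies and yields, after parabolic rescaling, a non-flat pointed limit $(M_\infty,g_\infty,J_\infty)$ which is a scalar flat ALE K\"ahler surface with $\int_{M_\infty}|Riem|^2\,dg_\infty<\infty$. Because the rescaling commutes with the $\mathbb{Z}_3$-action, $M_\infty$ carries a $J_\infty$-compatible isometric $\mathbb{Z}_3$-symmetry, and its ALE end $\mathbb{C}^2/\Gamma$ is constrained by the classification work of Kronheimer, LeBrun and Calderbank--Singer. Applied on $M_\infty$, the signature and Chern--Gauss--Bonnet formulas give an explicit topological lower bound on the $L^2$-curvature that has concentrated at the bubbling point.

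The crux is a quantitative bubble-tree analysis along the blow-up sequence. Up to the topological floor $32\pi^2 c_1^2(M)=32\pi^2\cdot 6$ coming from the K\"ahler surface Chern--Gauss--Bonnet identity, any loss of $L^2$-curvature at the singular time is accounted for by the trace-free Ricci energy of a weak Cheeger--Gromov limit on $M$ in $[\omega]$ plus the total curvature of each ALE bubble. The symmetry and classification constraints of step (ii) force any $\mathbb{Z}_3$-compatible scalar flat ALE K\"ahler bubble to contribute at least $32\pi^2\cdot 25/11$ to the energy budget. Since $\int_M R^2\,dg(t)$ is nonincreasing along the Calabi flow, the strict hypothesis $\int_M R^2\,dg_0 < 32\pi^2(6+25/11)$ therefore leaves no room for a bubble to form, contradicting the non-flatness produced by Theorem \ref{1}.

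With $|Riem|$ uniformly bounded on $[0,T)$, the main theorem of \cite{chen-he} gives $T=\infty$. Bounded curvature together with the Sobolev bound then yields subsequential Cheeger--Gromov convergence of $g(t)$ to a smooth K\"ahler metric $g_\infty$ in $[\omega]$, and the monotone decay of the Calabi energy forces $g_\infty$ to be extremal; the $\mathbb{Z}_3$-symmetry and the choice of class kill the Futaki invariant, so $g_\infty$ is in fact cscK. The main obstacle is step (i): establishing the uniform Sobolev bound along the Calabi flow is a genuinely delicate non-collapsing statement that depends crucially on the $c_1>0$, toric, $\mathbb{Z}_3$-invariant assumptions; without it Theorem \ref{1} cannot be invoked and the rest of the argument is vacuous.
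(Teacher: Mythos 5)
Your overall architecture (Sobolev bound $\Rightarrow$ Theorem \ref{1} $\Rightarrow$ scalar flat ALE bubble $\Rightarrow$ exclusion by the Calabi energy threshold $\Rightarrow$ long-time existence and convergence) matches the paper. But two of your steps are asserted rather than proved, and in both cases the mechanism you gesture at is not the one that actually works.

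First, the Sobolev bound. You describe it as a ``delicate non-collapsing statement that depends crucially on the $c_1>0$, toric, $\mathbb{Z}_3$-invariant assumptions,'' to be obtained by ruling out volume collapse at curvature scale, and you admit you cannot supply it. The paper's argument is entirely different and uses \emph{no} symmetry at all: it is a conformal-geometry computation in the spirit of Tian. One has $Y_{[g]}^2\geq 96\pi^2c_1^2-2\int_MR^2dg$ for a K\"ahler surface, and the hypothesis $\int_MR^2dg_0<32\pi^2(6+25/11)$ is exactly calibrated so that $\int_MR^2dg_0<32\pi^2c_1^2+\tfrac13\underline{R}^2V$, which rearranges to $Y_{[g_0]}^2>\int_M(R_{g_0}-\underline{R})^2dg_0$. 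One then checks $Y_{[g_0]}>0$ via a minimizing sequence, and absorbs the $\int R u^2$ term in the Yamabe quotient by Cauchy--Schwarz against $\|R-\underline{R}\|_{L^2}$, yielding $C_s\leq \underline{R}\sqrt{V}\,(Y_{[g_0]}-\|R_{g_0}-\underline{R}\|_{L^2})^{-1}$. Monotonicity of the Calabi energy then propagates this bound for all time. The point you miss is that the numerical threshold in the hypothesis is not (only) a bubble-energy budget; it is first and foremost the condition that makes this Yamabe argument close. Without identifying this, your step (i) is a genuine gap and, as you say yourself, the rest is vacuous.

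Second, the bubble exclusion. You posit that every $\mathbb{Z}_3$-compatible scalar flat ALE bubble costs at least $32\pi^2\cdot 25/11$ of curvature energy, so that the threshold leaves no room. That is not how the exclusion runs, and no such clean floor is established. The paper's mechanism is: (a) the toric assumption forces $H_2(M_\infty,\mathbb{Z})$ to be generated by holomorphic spheres with explicit intersection forms, and Gauss--Bonnet/signature on the ALE end give closed formulas for $\int|Ric_0|^2$ and $\int|W_-|^2$ in terms of the self-intersection numbers; (b) the $\mathbb{Z}_3$-action either triples the $|W_-|^2$ cost (excluded numerically) or forces the bubble, hence the corresponding spheres $S_i\subset M$, to be $\mathbb{Z}_3$-invariant, so $[S_i]=mH-n(E_1+E_2+E_3)$; (c) the \emph{Lagrangian condition} $[\omega][S_i]\to 0$, i.e.\ $3m-\tfrac32 n\to 0$, combined with integrality and $m^2-3n^2=-k$, forces the self-intersections to be large (e.g.\ $k,l\geq 11$ in the $b_2=2$ case); (d) such large self-intersections make $\int_{M_\infty}|Ric_0|^2$ exceed the bound $\tfrac14\int_MR^2dg-48\pi^2$ coming from Gauss--Bonnet on $M$. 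The interplay of the Wirtinger/Lagrangian condition with the integral homology of $\mathbb{CP}^2\sharp 3\overline{\mathbb{CP}^2}$ is the heart of the matter and is absent from your sketch. Finally, in step (iv) you do not address that the Cheeger--Gromov limit a priori carries a different complex structure $J_\infty$; the paper needs the toric structure and the classification of complex surfaces to identify $(M,J_\infty)$ with $(M,J)$ before concluding that the limit is a cscK metric in the class $[\omega]$ on the original surface.
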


\begin{rem}A priori lower bound of the Calabi energy is $32\pi^2 (75/11)$ for $x=1/2$. Our result  actually implies the existence of a cscK metric in this class which realizes this lower bound. 
\end{rem}
The toric structure is only used  to classify {\it maximal bubbles}. It will be interesting to remove this assumption and that will lead to some general existence of extK metrics on K\"ahler surfaces with $c_1>0$. 

The existence of extK metric is one of the central problems in
K\"ahler geometry. One can see  rapid developments in the last
few years. We want to refer readers to
\cite{apocalgau01, Dona04, Dona03, chen-tian, AP} etc.
for further references in this subject.

The organization of the paper goes as follows: in Section 2 we
recall some basic facts in K\"ahler geometry and  the Calabi
flow. In Section 3 we calculate the evolution equations for
the curvature tensor and derive some integral estimates along the Calabi flow. In Section 4  we discuss the formation of
the singularities. We will prove Theorem \ref{1} and use it to prove some general properties. In Section 5 we give some detailed description of scalar flat K\"ahler surfaces, in particular with toric assumption. In Section 6 we prove Theorem \ref{T-2}. We exhibit the details, for the particular case, how to bound the Sobolev constants and  how to rule out bubbles. In appendix we discuss our results roughly in general case.

\vspace{2mm}

\noindent {\bf Acknowledgement}: The first named author wishes to
thank G. Tian for insightful discussions on this topic. He is also grateful to S. K.
Donaldson and C. LeBrun for many discussions on four dimensional
geometry.  The second named author would like to thank Song Sun and Brian Weber for insightful discussions. He is also grateful to Zheng Hua, Jeff Viaclovsky and Bing Wang for valuable
discussions. The present version is thoroughly rewritten and the authors are grateful to the referee for numerous suggestions. 

\section{Preliminary}
Let $(M, [\omega], J)$ be a compact K\"ahler manifold
of complex dimension $n$. A K\"ahler form  on $M$ in local
coordinates is given by \begin{eqnarray*} \omega = \sqrt{-1}g_{i\bar{j}}dz^i\wedge dz^{\bar{j}},\end{eqnarray*} where
$\{g_{i\bar{j}}\}$ is a positive definite Hermitian matrix.  The K\"ahler condition is $d\omega=0$. In local
coordinates, it says that
\[
{{\p g_{i \bar j}\over {\p z_{\bar k}}}} = {{\p g_{i \bar k}\over
{\p z_{\bar j}}}}, \qquad {{\p g_{i \bar j}\over {\p z_{ k}}}} =
{{\p g_{k \bar j}\over {\p z_{i}}}}.
\]
 Note that the K\"ahler class $[\omega] \in H^2(M, \mathbb{R})$ is a nontrivial cohomology class.
By $\p\bar\p$-lemma, any other K\"ahler form in the same cohomology class is
of the form
\begin{eqnarray*}\omega_{\varphi}=\omega+\i
\p\bar{\p}\varphi>0,\end{eqnarray*} for some real valued function
$\varphi$ on $M.\;$
The corresponding K\"ahler metric is
denoted by
$g_\varphi=\left(g_{i\bar{j}}+\varphi,_{i\bar{j}}\right)dz^i\otimes
dz^{\bar{j}}.\;$  For
simplicity, we use both $g$ and $\omega$ to denote the K\"ahler
metric. Define the space of K\"ahler potentials  as \begin{eqnarray*}
\cH_\omega=\left\{\varphi|\omega_\varphi=\omega+\sqrt{-1}\partial\bar{\partial}\varphi>0, ~~
\varphi \in C^{\infty}(M) \right\}.\end{eqnarray*}

Given a K\"ahler metric $\omega$, its volume form is \[
dg:=
\frac{\omega^n}{n!}=(\sqrt{-1})^n\det{(g_{i\bar{j}})}dz^1\wedge
dz^{\bar{1}}\wedge\cdots \wedge dz^n\wedge dz^{\bar{n}}.\] The
Ricci curvature of $\omega$ is given by \[
R_{i\bar{j}}=-\p_i\p_{\bar{j}}\log {\det{(g_{k\bar{l}})}}.\] Its
Ricci form $\rho$ is of the form \begin{eqnarray*} \rho=\i
R_{i\bar{j}}dz^i\wedge dz^{\bar{j}} =-\i \p_i \p_{\bar
j}\log{\det{(g_{k\bar{l}})}}dz^i\wedge dz^{\bar{j}} .\end{eqnarray*} It is a real, closed
$(1,1)$ form. The cohomology class of  the Ricci form is the famous
first Chern class $c_1(M)$, independent of the metric.

In 1980s,  E. Calabi \cite{Calabi82, Calabi85} introduced
the Calabi functional as  \begin{eqnarray*}
\cC(\omega_\varphi)=\int_MR_{\varphi}^2dg_\varphi,\end{eqnarray*}
where $R_\varphi$ is the scalar curvature of $\omega_\varphi$. Let
$\underline{R}$ be the average of the scalar curvature, which is a
constant depending only on the class $[\omega]$ and the underlying
complex structure. Usually we use the following modified Calabi
energy \begin{eqnarray*} \tilde{\cC}
(\omega_\varphi)=\int_M\left(R_{\varphi}-\underline{R}\right)^2dg_\varphi\end{eqnarray*}
to replace $\cC(\omega_\varphi)$ since they only differ by a
topological constant. Calabi studied the variational problem to
minimize $\int_M|Rm|^2dg$ in $\cH_\omega$, which is equivalent to minimize the Calabi functional $\cC(\omega)$. 
A critical point is either a cscK metric or an extK metric depending on whether the Futaki character \cite{Futaki, Calabi85}
vanishes or not. The Futaki character $\cF=\cF_\varphi:
\h(M)\rightarrow \mathbb{C}$ is defined on the Lie algebra $\h(M)$
of all holomorphic vector fields of $M$ as follows,
\begin{eqnarray*}
\cF_\varphi(X)=-\int_MX(F_\varphi)dg_\varphi,\end{eqnarray*}
where $X \in \h(M)$ and $F_\varphi$ is a real valued function
defined by
\begin{eqnarray*} \triangle_\varphi\;  F_\varphi=R_\varphi - \underline{R},\end{eqnarray*}
where $\triangle_\varphi$ is the Laplacian operator of the metric
$\omega_\varphi.$
The Calabi flow \cite{Calabi82} is defined by
\begin{eqnarray*}
\frac{\p \varphi}{\p t}=R_\varphi-\underline{R}.\end{eqnarray*}
Under the Calabi flow, we
have\begin{eqnarray*}\frac{d}{dt}\int_M(R_\varphi-\underline{R})^2dg_\varphi=-2\int_M\left(D_\varphi
R_\varphi, R_\varphi\right) dg_\varphi  \leq
0,\end{eqnarray*} where $D_\varphi$ is Lichn\'erowicz operator
with respect to $\omega_\varphi.$ The Lichn\'erowicz
operator  $D$ is defined by
\begin{eqnarray*} Df={f,_{\alpha\beta}}^{\alpha\beta},\end{eqnarray*} where
the covariant derivative is with respect to $\omega$.
The Calabi flow enjoys many wonderful properties.  For instance,
it is the gradient flow of both the Calabi energy and the Mabuchi energy. It also
decreases the distance function in the space of K\"ahler metrics \cite{Calabi-Chen}.
Assuming the long time existence, S. Donaldson \cite{Dona02}
described the convergence properties of the Calabi flow
conceptually and made conjectures regarding the convergence of the
Calabi flow, and he also related the convergent behavior of the
Calabi flow to the stability conjecture (Yau-Tian-Donaldson
conjecture) in K\"ahler geometry. 

\section{The evolution equations along the Calabi flow}
In this section, we study the evolution equations of the curvature
tensor under the Calabi flow and prove some integral estimates. In
particular, we give  interpolation formulas of  curvature tensors
under the Calabi flow, similar to the results of R. Hamilton in
Ricci flow \cite{Hamilton82}. 
\begin{theo}\label{T-3-1}
If the
curvature is bounded along the Calabi flow, then
\[
\frac{\p}{\p
t}\int_M|\nabla^kRm|^2dg\leq-\frac{1}{2}\int_M|\nabla^{k+2}Rm|^2dg+C\int_M|Rm|^2dg,
\]
where $C=C(n, k, \max|Rm|)$ is a constant.
\end{theo}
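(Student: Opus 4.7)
The plan is to adapt Hamilton's interpolation technique for Ricci flow \cite{Hamilton82} to the sixth-order (in curvature) Calabi flow. The argument has three steps: derive the evolution of $\nabla^k Rm$, differentiate $\int_M|\nabla^k Rm|^2\,dg$ in time to isolate the good term $-2\int_M|\nabla^{k+2}Rm|^2\,dg$, and absorb all nonlinear cross terms into $\tfrac{3}{2}\int_M|\nabla^{k+2}Rm|^2\,dg+C\int_M|Rm|^2\,dg$ via interpolation.

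First I would compute $\partial_t Rm$ from the Calabi flow relation $\partial_t g_{i\bar j}=R_{,i\bar j}$. Varying the two-derivative expression defining $R_{i\bar j k\bar l}$ and using K\"ahler symmetries yields schematically
\[
\partial_t Rm = -\Delta^2 Rm + Rm\ast\nabla^2 Rm + \nabla Rm\ast\nabla Rm + Rm\ast Rm\ast Rm,
\]
where $\ast$ denotes contractions with the evolving metric. Differentiating $k$ times covariantly and commuting $\nabla^k$ past $\partial_t$ (which generates additional curvature-times-derivative corrections through the Ricci identity) gives
\[
\partial_t\nabla^k Rm = -\Delta^2\nabla^k Rm + \sum_{a+b\le k+2}\nabla^a Rm\ast\nabla^b Rm + \sum_{a+b+c\le k+2}\nabla^a Rm\ast\nabla^b Rm\ast\nabla^c Rm.
\]
Computing $\frac{d}{dt}\int_M|\nabla^k Rm|^2\,dg$ from this and $\partial_t(dg)=\Delta R\,dg$, two integrations by parts on the linear principal term produce $-2\int_M|\nabla^{k+2}Rm|^2\,dg$; every remaining contribution reduces to an integral of the form
\[
\int_M \nabla^k Rm\ast\nabla^{a_1}Rm\ast\cdots\ast\nabla^{a_m}Rm\,dg,\qquad a_1+\cdots+a_m\le k+2,\quad m\in\{2,3,4\}.
\]

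The main obstacle is bounding these multilinear cross terms, and the decisive tool is Hamilton's interpolation inequality: for any smooth tensor $T$ and integers $1\le j\le p-1$,
\[
\int_M|\nabla^j T|^{2p/j}\,dg\le C(n,j,p)\,\max_M|T|^{2p/j-2}\int_M|\nabla^p T|^2\,dg.
\]
Applied with $T=Rm$ and $p=k+2$, combined with H\"older's inequality on each cross term---using exponent $2$ for the $\nabla^k Rm$ factor and $2(k+2)/a_i$ for each of the remaining factors, whose reciprocals sum to $1$ precisely when $\sum a_i=k+2$, with the slack case $\sum a_i<k+2$ absorbed using $\max|Rm|$---every cross term is controlled by
\[
\epsilon\int_M|\nabla^{k+2}Rm|^2\,dg + C(\epsilon,n,k,\max|Rm|)\int_M|Rm|^2\,dg.
\]
Choosing $\epsilon$ small enough that the total of all such contributions does not exceed $\tfrac{3}{2}\int_M|\nabla^{k+2}Rm|^2\,dg$, which is absorbed into the leading $-2\int_M|\nabla^{k+2}Rm|^2\,dg$, yields the stated estimate. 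The delicate combinatorial point is that this H\"older matching works precisely because the principal operator of the Calabi flow is $\Delta^2$ rather than $\Delta$: the two extra orders of differentiation (compared with Ricci flow) are exactly what is needed for the interpolation exponents to close, and this is why the analogue of Hamilton's $-\int|\nabla^{k+1}Rm|^2$ becomes $-\tfrac12\int|\nabla^{k+2}Rm|^2$ here.
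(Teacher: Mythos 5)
Your proposal is correct and follows essentially the same route as the paper: Hamilton-type interpolation inequalities applied to the biharmonic evolution equation for $\nabla^k Rm$, with H\"older exponents $2(k+2)/a_i$ matched so that the cross terms are absorbed into $\epsilon\int_M|\nabla^{k+2}Rm|^2dg+C\int_M|Rm|^2dg$. The one imprecision is the coefficient of the leading term: integrating by parts gives $-2\int_M|\triangle\nabla^kRm|^2dg$, and in the paper's K\"ahler conventions $\int_M|\triangle T|^2dg=\tfrac12\int_M|\nabla^2T|^2dg$ plus curvature corrections (their Lemma 3.3), so the good term is $-\int_M|\nabla^{k+2}Rm|^2dg$ rather than $-2\int_M|\nabla^{k+2}Rm|^2dg$; since only $-\tfrac12$ is claimed in the final estimate, this does not affect the conclusion.
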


The evolution equation of the
curvature tensor (Riemannian curvature) is given by
\begin{equation}\label{3-1}
\begin{split}
\frac{\p}{\p t}Rm&=-\nabla
\bar{\nabla}\nabla\bar{\nabla}R+Rm*\nabla^2R\\&=-\triangle^2Rm+\nabla^2Rm*Rm+\nabla
Rm*\nabla Rm,
\end{split}
\end{equation} where we use $*$ to denote possible contractions of tensors for simplicity. 
\begin{lemma}
The evolution for high derivatives of Riemannian curvature is
\begin{equation}\label{3-2}
\frac{\p}{\p t}\nabla^kRm=-\triangle^2\nabla
^kRm+\sum_{i+j=k+2}\nabla^iRm*\nabla^jRm.
\end{equation}
\end{lemma}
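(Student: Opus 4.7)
The plan is to prove the lemma by induction on $k$, with the base case $k=0$ being exactly equation (3-1), which gives
\[
\p_t Rm = -\triangle^2 Rm + \nabla^2 Rm * Rm + \nabla Rm * \nabla Rm,
\]
and this matches the right-hand side with $i+j=2$ after absorbing universal constants into $*$.

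For the inductive step, I would write $\nabla^{k+1} = \nabla \circ \nabla^k$ and compute
\[
\p_t \nabla^{k+1} Rm = \nabla \bl \p_t \nabla^k Rm \br + [\p_t,\nabla]\, \nabla^k Rm.
\]
The first summand is handled by applying the inductive hypothesis and then one further $\nabla$; differentiating the sum $\sum_{i+j=k+2} \nabla^i Rm * \nabla^j Rm$ by the product rule produces only terms of the form $\nabla^a Rm * \nabla^b Rm$ with $a+b=k+3$, which is exactly what we want. For the term $\nabla \triangle^2 \nabla^k Rm$, I would commute $\nabla$ past $\triangle^2$; each commutator $[\nabla,\triangle]$ contributes schematically $Rm * \nabla(\cdot)+\nabla Rm * (\cdot)$, so iterating yields $\triangle^2 \nabla^{k+1} Rm$ plus terms of the form $\nabla^i Rm * \nabla^j Rm$ with $i+j=k+3$, as desired.

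The more subtle piece is the commutator $[\p_t,\nabla]$, which acts on a tensor via the time derivative of the Christoffel symbols. Since the Calabi flow is $\p_t \varphi = R - \underline R$, the metric evolves by $\p_t g_{i\bar j} = \nabla_i \nabla_{\bar j} R$, so $\p_t g$ is schematically $\nabla^2 Rm$ (because $R$ is a contraction of $Rm$), and hence $\p_t \Gamma \sim \nabla^3 Rm$. Thus $[\p_t,\nabla]\nabla^k Rm$ is a sum of terms $\nabla^3 Rm * \nabla^k Rm$, again of the required total order $i+j = k+3$. Combining all three contributions produces the claimed schematic expression with $i+j = (k+1)+2$.

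I expect the only real obstacle to be bookkeeping: verifying that every term produced by differentiating, commuting $\nabla$ through $\triangle^2$, and commuting $\p_t$ through $\nabla$ has total derivative count equal to $k+3$. This is essentially the same counting argument that Hamilton carries out for the Ricci flow in \cite{Hamilton82}, adapted to the fact that here $\p_t g$ costs two derivatives of $Rm$ rather than one. Once the counting is done, no curvature term of the wrong order can appear, and the formula follows.
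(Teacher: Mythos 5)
Your proposal is correct and follows essentially the same route as the paper: induction on $k$, splitting $\p_t\nabla^{k+1}Rm$ into $\nabla(\p_t\nabla^kRm)$ plus the $[\p_t,\nabla]$ commutator, and using the commutation rule $\nabla\triangle T=\triangle\nabla T+\nabla Rm*T+Rm*\nabla T$ to move $\nabla$ past $\triangle^2$. Your accounting of the commutator term via $\p_t\Gamma\sim\nabla^3Rm$ (so that it contributes $\nabla^3Rm*\nabla^kRm$, of total order $k+3$) is in fact slightly more explicit than the paper, which simply records it as $\nabla^2Rm*\nabla^{k+1}Rm$; both land in the same schematic sum.
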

\begin{proof}
Note that for any tensor $T$, by interchanging the derivatives
\[
\nabla \triangle T=\triangle \nabla T+\nabla Rm*T+Rm*\nabla T.
\]
If $k=0$, this is true by (\ref{3-1}). We proceed by
induction on $n$. This gives
\begin{eqnarray*}
\frac{\p}{\p t}\nabla^{k+1}Rm&=&\nabla(\frac{\p}{\p
t}\nabla^kRm)+\nabla^2Rm*\nabla^{k+1}Rm\\
&=&\nabla\left(-\triangle^2\nabla^kRm+\sum_{i+j=k+2}\nabla^iRm*\nabla^jRm\right)+\nabla^2Rm*\nabla^{k+1}Rm\\
&=&-\nabla\triangle^2\nabla^kRm+\sum_{i+j=k+3}\nabla^iRm*\nabla^jRm\\
&=&-\triangle^2\nabla^{k+1}Rm+\sum_{i+j=k+3}\nabla^iRm*\nabla^jRm.
\end{eqnarray*}
This completes the proof.
\end{proof}
\begin{lemma}
In complex setting, for any tensor $T$, the different norms are
relevant as the following,
\begin{equation}\label{3-3}
\int_M|\triangle T|^2dg=\int_M|\nabla\bar\nabla
T|^2dg+\int_MRm*T*\nabla^2T\end{equation} and
\begin{equation}\label{3-4}
\int_M|\triangle
T|^2dg=\frac{1}{2}\int_M|\nabla^2T|^2dg+\int_MRm*T*\nabla^2
Tdg+\int_MRm*\nabla T*\nabla Tdg.
\end{equation}
\end{lemma}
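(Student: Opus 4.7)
The two identities are of Bochner/Weitzenb\"ock type and my plan is to obtain them by careful integration by parts together with the Kähler commutation relations, so that every commutator of covariant derivatives produces a schematic $Rm*T$ term. First I would record the basic ingredients I plan to use: on the Kähler manifold $M$ the real Laplacian acting on any tensor $T$ can be written as
\[
\triangle T \;=\; g^{i\bar j}\bigl(\nabla_i\nabla_{\bar j}T+\nabla_{\bar j}\nabla_i T\bigr) \;=\; 2\,g^{i\bar j}\nabla_i\nabla_{\bar j}T + Rm*T,
\]
where the last equality uses $[\nabla_i,\nabla_{\bar j}]T=Rm*T$. I would also note that the full real Hessian decomposes as
\[
|\nabla^2 T|^2 \;=\; |\nabla\nabla T|^2 + 2\,|\nabla\bar\nabla T|^2 + |\bar\nabla\bar\nabla T|^2
\]
in complex indices. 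Everything rests on these two observations; the rest is integration by parts.

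To prove (3.3), I would start from $\int_M|\triangle T|^2\,dg=\int_M\triangle T\cdot\overline{\triangle T}\,dg$, write one factor as $2\,g^{i\bar j}\nabla_i\nabla_{\bar j}T$ plus a curvature correction, and integrate by parts twice. The first integration moves a $\nabla_i$ off $T$ and onto the conjugate factor, the second moves a $\nabla_{\bar j}$; in the end the principal part becomes $\int_M|\nabla\bar\nabla T|^2\,dg$. Every time one covariant derivative is pushed past one of the opposite type, the commutator produces a term of the schematic form $Rm*T*\nabla^2T$, which absorbs into the second term on the right-hand side of (3.3).

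For (3.4), my plan is to use the decomposition of $|\nabla^2T|^2$ above and to reduce the pure-type summands $|\nabla\nabla T|^2$ and $|\bar\nabla\bar\nabla T|^2$ to the mixed-type summand $|\nabla\bar\nabla T|^2$ up to curvature corrections. Concretely, I would compute $\int|\nabla_i\nabla_j T|^2\,dg$ by integrating by parts once on the conjugated factor to get something like $-\int \nabla_j T\cdot\overline{\nabla_{\bar i}\nabla_i\nabla_{\bar j}T}\,dg$, then use $[\nabla_{\bar i},\nabla_i]=Rm$ to swap the two inner derivatives. After the swap the expression equals $\int|\nabla_i\nabla_{\bar j}T|^2\,dg$ plus commutator remainders. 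Two kinds of remainders show up: terms where curvature multiplies $T$ and a second derivative of $T$ (type $Rm*T*\nabla^2T$), and terms where curvature multiplies two first derivatives of $T$ (type $Rm*\nabla T*\nabla T$), which is exactly what (3.4) allows. Summing the three summands and invoking (3.3) to trade $\int|\nabla\bar\nabla T|^2\,dg$ for $\int|\triangle T|^2\,dg$ produces the factor $\tfrac12$ on the right-hand side of (3.4).

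The main obstacle here is not conceptual but bookkeeping: carrying the contraction patterns through the integrations by parts and verifying that every commutator remainder really is of one of the two schematic shapes $Rm*T*\nabla^2T$ or $Rm*\nabla T*\nabla T$, with no stray terms involving $\nabla Rm$. Since the commutator of two covariant derivatives on a Kähler manifold introduces $Rm$ but no derivatives thereof, and since each integration by parts shuffles a $\nabla$ without differentiating the curvature tensor, this schematic control follows essentially automatically; the computation should then be a clean exercise in Kähler calculus.
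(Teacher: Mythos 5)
Your proposal follows essentially the same route as the paper: both identities are obtained by writing $|\triangle T|^2$ in complex indices, integrating by parts to redistribute the derivatives, and commuting $\nabla$ and $\bar\nabla$ so that every commutator contributes a schematic $Rm*T*\nabla^2T$ or $Rm*\nabla T*\nabla T$ term; the paper simply derives the pure-type identity $\int_M|\triangle T|^2dg=\int_MT,_{ij}T,_{\bar i\bar j}dg+\dots$ alongside \eqref{3-3} and averages the two to produce the $\frac{1}{2}$ in \eqref{3-4}, rather than first decomposing $|\nabla^2T|^2$ into type components. The one point to watch is normalization: the paper's Laplacian is the complex one, $\triangle T=T,_{i\bar i}$, whereas you start from the real Laplacian $g^{i\bar j}(\nabla_i\nabla_{\bar j}+\nabla_{\bar j}\nabla_i)T$, and with your weighting $|\nabla^2T|^2=|\nabla\nabla T|^2+2|\nabla\bar\nabla T|^2+|\bar\nabla\bar\nabla T|^2$ the principal constants do not come out as the stated $1$ and $\frac{1}{2}$ (the paper implicitly takes $|\nabla^2T|^2$ to be the sum of just the two type components $|\nabla\nabla T|^2+|\nabla\bar\nabla T|^2$), so the conventions must be fixed consistently before the coefficients match.
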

\begin{proof}
Integration by parts,
\[
\int_M|\triangle
T|^2dg=\int_MT,_{i\bar{i}}T,_{j\bar{j}}dg=\int_MT,_{i\bar{j}}T,_{j\bar{i}}dg=\int_MT,_{i\bar{j}}T,_{\bar{i}j}dg+\int_MRm*T*\nabla^2T.
\]
This gives (\ref{3-3}). Similarly,
\[
\int_M|\triangle T|^2dg=\int_MT,_{ij}T,_{\bar i\bar
j}dg+\int_MRm*T*\nabla Tdg+\int_MRm*\nabla T*\nabla Tdg.
\]
Combine them,
\[
\int_M|\triangle
T|^2dg=\frac{1}{2}\int_M|\nabla^2T|dg+\int_MRm*T*\nabla
Tdg+\int_MRm*\nabla T*\nabla Tdg.
\]
This completes the proof.
\end{proof}
\begin{lemma}
Along the Calabi flow, we have
\begin{equation}\label{3-5}
\begin{split}
\frac{\p}{\p
t}\int_M|\nabla^kRm|^2dg&=-\int_M|\nabla^{k+2}Rm|^2dg
+\sum_{i+j=k+2}\int_M\nabla^iRm*\nabla^jRm*\nabla^kRmdg\\
&\quad+\int_MRm*\nabla
^{k+1}Rm*\nabla^{k+1}Rmdg.
\end{split}
\end{equation}
\end{lemma}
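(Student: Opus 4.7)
The plan is to differentiate under the integral sign, substitute the evolution equation~(\ref{3-2}) proved just above, and then integrate by parts while carefully tracking every source of variation. Splitting
\[
\frac{\p}{\p t}\int_M |\nabla^k Rm|^2\, dg = \int_M 2\langle \p_t \nabla^k Rm,\, \nabla^k Rm\rangle\, dg + \int_M |\nabla^k Rm|^2\, \p_t(dg) + (\text{variation of the inverse metrics in the pointwise norm}),
\]
every resulting term must be reduced to one of the two admissible shapes: $\nabla^i Rm * \nabla^j Rm * \nabla^k Rm$ with $i+j = k+2$, or $Rm * \nabla^{k+1} Rm * \nabla^{k+1} Rm$.

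Along the Calabi flow $\p_t \varphi = R - \underline{R}$, so $\p_t g_{i\bar j} = R_{,i\bar j}$, and therefore $\p_t g^{i\bar j}$ and $\p_t\log\det(g_{k\bar l}) = \triangle R$ are both schematically $\nabla^2 Rm$. The metric and volume contributions are then of the form $\nabla^2 Rm * \nabla^k Rm * \nabla^k Rm$, which fits in the advertised sum with $i=2,\,j=k$. For the tensor term I would insert~(\ref{3-2}): its non-principal part $\sum_{i+j=k+2}\nabla^i Rm * \nabla^j Rm$, paired with $\nabla^k Rm$, lands directly in the advertised sum, while the principal part yields, after integrating by parts twice,
\[
-2\int_M \langle \triangle^2 \nabla^k Rm,\, \nabla^k Rm\rangle\, dg = -2\int_M |\triangle \nabla^k Rm|^2\, dg.
\]

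To convert this into the advertised leading term I would apply identity~(\ref{3-4}) with $T = \nabla^k Rm$, obtaining
\[
-2\int_M |\triangle \nabla^k Rm|^2\, dg = -\int_M |\nabla^{k+2} Rm|^2\, dg - 2\int_M Rm * \nabla^k Rm * \nabla^{k+2} Rm\, dg - 2\int_M Rm * \nabla^{k+1} Rm * \nabla^{k+1} Rm\, dg.
\]
The last summand is already of the desired shape. The middle summand is handled by one further integration by parts, which trades a factor of $\nabla^{k+2} Rm$ for $\nabla^{k+1} Rm$ and produces $\nabla Rm * \nabla^k Rm * \nabla^{k+1} Rm$ (fitting the sum with $i=1,\,j=k+1$) together with another copy of $Rm * \nabla^{k+1} Rm * \nabla^{k+1} Rm$. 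Collecting all contributions gives the claimed identity.

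The main obstacle is not analytic but organizational: systematically gathering every schematic contraction, verifying that each ``error'' term fits one of the two prescribed shapes, and confirming that the metric--volume variations produce nothing outside that structure. Beyond identities~(\ref{3-3})--(\ref{3-4}), the evolution equation~(\ref{3-2}), and one additional integration by parts, no further idea is required.
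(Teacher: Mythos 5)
Your proposal is correct and follows essentially the same route as the paper: differentiate under the integral (with the metric and volume variations contributing a $\nabla^2Rm*\nabla^kRm*\nabla^kRm$ term), substitute the evolution equation \eqref{3-2}, integrate by parts to get $-2\int_M|\triangle\nabla^kRm|^2dg$, and convert this via identity \eqref{3-4} (plus one more integration by parts on the $Rm*\nabla^kRm*\nabla^{k+2}Rm$ term) into the stated leading term and remainders. In fact your write-up is slightly more explicit than the paper's about how the cross term from \eqref{3-4} is absorbed into the $i+j=k+2$ sum.
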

\begin{proof}We compute
\begin{equation*}
\begin{split}
\frac{\p}{\p t}\int_M|\nabla^kRm|^2dg&=\int_M\left\langle
\frac{\p}{\p t}\nabla^kRm, \overline{\nabla^nRm}\right\rangle
dg+\int_M\left\langle \nabla^kRm, \frac{\p}{\p
t}\overline{\nabla^nRm}\right\rangle dg\\
&\quad+\int_M\nabla^2Rm*\nabla^k Rm*\nabla^kRmdg.
\end{split}
\end{equation*}
By (\ref{3-2}), we compute
\[
\frac{\p }{\p
t}\nabla^kRm=-\triangle^2\nabla^kRm+\sum_{i+j=k+2}\nabla^iRm*\nabla^jRm,
\]
\[
\frac{\p}{\p t}\overline{\nabla^kRm}=\overline{\frac{\p}{\p
t}\nabla^kRm}=-\triangle^2\overline{\nabla^kRm}+\sum_{i+j=k+2}\nabla^iRm*\nabla^jRm.
\]
It gives that
\begin{eqnarray*}
\frac{\p}{\p t}\int_M|\nabla^kRm|^2dg&=&-2\int|\triangle
\nabla^kRm|^2dg+\sum_{i+j=k+2}\int_M\nabla^iRm*\nabla^jRm*\nabla^kRmdg\\
&=&-\int_M|\nabla^{k+2}Rm|^2dg+\sum_{i+j=k+2}\int_M\nabla^iRm*\nabla^jRm*\nabla^kRmdg\\
\quad&&+\int_MRm*\nabla^{k+1}Rm*\nabla^{k+1}Rmdg.
\end{eqnarray*}
\end{proof}
To derive integral estimates, we need some interpolation
inequalities for tensors. These inequalities (\ref{3-6})
(\ref{3-7}) and (\ref{3-8}) are due to R. Hamilton \cite{Hamilton82} and we list them below for the sake of completeness. 
Note that the estimates
do not depend on the metrics evolved. 
\begin{lemma}(Interpolation inequalities for tensors)
Let $M$ be a compact Riemannian manifold of dimension $n$ and $T$
be any tensor on $M$. Suppose
\[
\frac{1}{p}+\frac{1}{q}=\frac{1}{r}\quad\mbox{with}\quad r\geq
1.
\]
Then
\begin{equation}\label{3-6}
\left\{\int_M|\nabla T|^{2r}dg\right\}^{\frac{1}{r}}\leq
(2r-2+n)\left\{\int_M|\nabla^2T|^pdg\right\}^{\frac{1}{p}}\left\{\int_M|T|^qdg\right\}^{\frac{1}{q}}.
\end{equation}
\end{lemma}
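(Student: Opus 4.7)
The plan is to prove the estimate by integration by parts followed by Hölder's inequality, in the manner of Hamilton. The desired inequality is homogeneous in $T$, so one can assume $T$ is smooth with $\int_M|\nabla T|^{2r}\,dg$ finite and positive.

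First I would rewrite
$$\int_M |\nabla T|^{2r}\,dg = \int_M |\nabla T|^{2(r-1)}\, g^{ij}\langle \nabla_i T, \nabla_j T\rangle\,dg$$
and integrate by parts in the second factor $\nabla_j T$. The divergence hits two things: the weight $|\nabla T|^{2(r-1)}$, producing a term bounded by $2(r-1)\int_M |\nabla T|^{2(r-1)}|\nabla^2 T|\,|T|\,dg$ after using $\bigl|\nabla|\nabla T|^2\bigr|\leq 2|\nabla T|\,|\nabla^2 T|$, and the factor $\nabla_j T$ itself, producing a term involving $\triangle T$ which is bounded by $n\int_M |\nabla T|^{2(r-1)}|\nabla^2 T|\,|T|\,dg$ after controlling the Laplacian by the full Hessian in dimension $n$. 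Summing yields the key integrated estimate
$$\int_M |\nabla T|^{2r}\,dg \;\leq\; (2r-2+n)\int_M |\nabla T|^{2(r-1)}|\nabla^2 T|\,|T|\,dg.$$

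The second step is to apply Hölder's inequality to the right-hand side with three exponents, chosen so that the factors $|\nabla T|^{2(r-1)}$, $|\nabla^2 T|$, $|T|$ integrate to $\int_M|\nabla T|^{2r}\,dg$, $\int_M|\nabla^2 T|^p\,dg$, $\int_M|T|^q\,dg$ respectively. The only consistent choice is to take the first exponent to be $\tfrac{r}{r-1}$, and the Hölder conjugate relation then reads $\tfrac{r-1}{r}+\tfrac{1}{p}+\tfrac{1}{q}=1$, which is exactly the hypothesis $\tfrac{1}{p}+\tfrac{1}{q}=\tfrac{1}{r}$. Substituting and dividing both sides by $\bigl(\int_M|\nabla T|^{2r}\,dg\bigr)^{(r-1)/r}$ gives the claimed inequality.

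Neither step presents a genuine obstacle; the only work lies in keeping the constants straight in the integration-by-parts bookkeeping, where the factor $n$ arises from estimating $|\triangle T|$ in terms of $|\nabla^2 T|$ and the factor $2r-2$ comes from differentiating the weight $|\nabla T|^{2(r-1)}$. The argument is metric-independent in the sense that no curvature terms appear, which is essential for its use along the Calabi flow where the metrics vary.
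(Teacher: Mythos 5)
Your argument is correct and is precisely Hamilton's original proof (integration by parts against the weight $|\nabla T|^{2r-2}$, with the $n$ coming from $|\triangle T|\leq n|\nabla^2 T|$ and the $2r-2$ from differentiating the weight, followed by three-factor H\"older with exponents $\tfrac{r}{r-1},p,q$). The paper itself gives no proof, simply citing Hamilton's Ricci flow paper, so your write-up supplies exactly the argument the authors are deferring to; the only point worth a sentence in a careful version is the regularization $|\nabla T|^2+\epsilon$ needed to differentiate the weight at zeros of $\nabla T$ when $2r-2<1$.
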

\begin{cor}
\begin{equation}\label{3-7}
\int_M|\nabla^iT|^{2k/i}dg\leq
C\max_M|T|^{2k/i-2}\int_M|\nabla^kT|^2dg.
\end{equation}
\end{cor}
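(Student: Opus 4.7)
The corollary is a family of Gagliardo--Nirenberg--Hamilton interpolation inequalities, and the plan is to deduce them from (3.6) by a discrete log-convexity iteration. Set $A_i \defeq \int_M |\nabla^i T|^{2k/i}\, dg$ for $1 \leq i \leq k$, so that $A_k$ is exactly the right-hand side of (3.7).

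The first step is to apply (3.6) to the tensor $\nabla^{i-1}T$ (playing the role of $T$) for each $2 \leq i \leq k-1$, with exponents $r = k/i$, $p = 2k/(i+1)$, $q = 2k/(i-1)$. One checks $1/p + 1/q = 1/r$ and $r \geq 1$, so (3.6) applies and produces the discrete log-convex bound
\[
A_i^{i/k} \leq C(n,k)\, A_{i-1}^{(i-1)/(2k)}\, A_{i+1}^{(i+1)/(2k)}.
\]
For the left endpoint $i=1$, the exponent $q$ degenerates; I would handle this by letting $q \to \infty$ in (3.6) (with $r = p = k$ applied to $T$ itself), which yields
\[
A_1^{1/k} \leq C \sup_M |T|\, A_2^{1/k}.
\]
The right endpoint $i = k$ is trivial.

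Iterating the log-convex inequalities and propagating the boundary bound yields $A_i \leq C (\sup_M |T|)^{2k/i - 2} A_k$, which is (3.7). I expect the main obstacle to be the bookkeeping of exponents in this last step: because $c_1 := \log A_1$ is not absolutely bounded but is only bounded in terms of $c_2 := \log A_2$ and $\sup_M |T|$, the $\sup_M |T|$ factors must be propagated through all intermediate log-convex inequalities simultaneously, rather than invoked as fixed endpoint data in a standard convex-interpolation argument. A clean way around this is to verify by induction on $i$ that $A_i / \bigl((\sup_M |T|)^{2k/i-2} A_k\bigr)$ is bounded by a constant depending only on $n$ and $k$; the algebra of exponents is then routine, provided the choices in the first step are made with care.
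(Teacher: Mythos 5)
The paper offers no proof of this corollary at all --- it is stated as one of Hamilton's interpolation inequalities, cited to \cite{Hamilton82} ``for the sake of completeness'' --- so there is nothing internal to compare against; your argument is, in substance, Hamilton's original derivation of his Corollary 12.6 from his Theorem 12.1, and it is correct. Your exponent choices check out: with $r=k/i$, $p=2k/(i+1)$, $q=2k/(i-1)$ applied to $\nabla^{i-1}T$ one indeed gets $A_i^{i/k}\leq C\,A_{i-1}^{(i-1)/(2k)}A_{i+1}^{(i+1)/(2k)}$ for $2\leq i\leq k-1$, and the $q\to\infty$ limit at $i=1$ gives $A_1^{1/k}\leq C\max_M|T|\cdot A_2^{1/k}$. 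The bookkeeping you flag as the remaining obstacle does close, and there is a cleaner way to organize it than propagating $\max_M|T|$ through every intermediate inequality: set $M_0:=\max_M|T|$ and $M_i:=A_i^{i/(2k)}=\|\nabla^iT\|_{L^{2k/i}}$ for $1\leq i\leq k$. Your two families of inequalities then read uniformly as $M_i^2\leq C\,M_{i-1}M_{i+1}$ for \emph{all} $1\leq i\leq k-1$, i.e.\ the full chain $M_0,\dots,M_k$ is log-convex up to a bounded error with \emph{both} endpoints pinned. Writing $m_i=\log M_i$ and subtracting the linear interpolant plus the corrector $\tfrac{1}{2}(\log C)\,i(k-i)$, the discrete maximum principle gives $M_i\leq C'\,M_0^{1-i/k}M_k^{i/k}$, and raising to the power $2k/i$ is exactly \eqref{3-7}. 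This avoids the asymmetric induction you describe and makes transparent that the constant depends only on $n$ and $k$ (in particular not on the evolving metric, which is the point of the remark preceding the lemma).
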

\begin{cor}
\begin{equation}\label{3-8}
\int_M|\nabla^iT|^2dg\leq
C\left\{\int_M|\nabla^kT|^2dg\right\}^{i/k}\left\{\int_M|T|^2dg\right\}^{1-i/k}.
\end{equation}
\end{cor}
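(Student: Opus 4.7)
The inequality (3.8) is a standard multiplicative interpolation between $L^2$-norms of successive derivatives, and I would derive it in a purely formal way from the two-point inequality (3.6) just established. The key is to specialize (3.6) with $r=1$, $p=q=2$, and then apply it not to $T$ itself but to the tensor $\nabla^{i-1}T$ in place of $T$. This immediately yields the two-step recursion
\[
\int_M |\nabla^i T|^2 \, dg \;\leq\; C(n) \left(\int_M |\nabla^{i+1} T|^2 \, dg\right)^{1/2} \left(\int_M |\nabla^{i-1} T|^2 \, dg\right)^{1/2}
\]
valid for every $1 \leq i \leq k-1$. Writing $a_i = \int_M |\nabla^i T|^2 \, dg$, this is the discrete log-convexity estimate $a_i^2 \leq C^2 \, a_{i-1} \, a_{i+1}$.

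The second step is to promote this to (3.8) by a convexity argument. Taking logarithms rephrases the recursion as a one-sided bound on the discrete second difference of $(\log a_i)$, namely
\[
\log a_{i+1} - 2\log a_i + \log a_{i-1} \;\geq\; -2\log C.
\]
Adding a quadratic correction $\alpha i^2$ with $2\alpha \geq 2\log C$ makes the modified sequence genuinely convex in $i$. Standard convexity between the endpoints $i=0$ and $i=k$ then yields
\[
\log a_i \;\leq\; (1 - i/k)\log a_0 + (i/k)\log a_k + C' i(k-i),
\]
and exponentiating produces (3.8), with a constant depending only on $n$, $i$ and $k$. As an equivalent, more hands-on route, one can iterate the two-step recursion directly, substituting each intermediate $a_j$ by the geometric mean of its neighbors and tracking the exponents to arrive at the convex combination $i/k$ and $1-i/k$; this is the form in Hamilton's original treatment.

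The only mild subtlety is the degenerate case. If $a_k = 0$, then the recursion $a_{k-1}^2 \leq C^2 a_{k-2} a_k = 0$ forces $a_{k-1} = 0$, and downward induction gives $a_j = 0$ for all $0 < j < k$, so (3.8) holds trivially; likewise $a_0 = 0$ forces $T \equiv 0$. No serious obstacle is expected, as the whole argument is combinatorial once the two-step inequality is in hand, and takes just a few lines.
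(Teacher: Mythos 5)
Your argument is correct: specializing \eqref{3-6} with $r=1$, $p=q=2$ to the tensor $\nabla^{i-1}T$ gives the log-convexity recursion $a_i^2\leq C^2 a_{i-1}a_{i+1}$, and the discrete convexity/endpoint argument (with the separate treatment of vanishing terms) is exactly how Hamilton derives this in \cite{Hamilton82}. The paper itself offers no proof here --- it simply quotes \eqref{3-6}--\eqref{3-8} from Hamilton --- so your write-up supplies precisely the standard argument being invoked.
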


Now we are ready to to prove Theorem \ref{T-3-1}.
\begin{proof}
Recall
\begin{eqnarray*}
\frac{\p}{\p t}\int_M|\nabla^kRm|^2dg&=&-\int_M|\nabla^{k+2}Rm|^2dg+\sum_{i+j=k+2}\int_M\nabla^iRm*\nabla^jRm*\nabla^kRmdg\\
\quad&&+\int_MRm*\nabla^{k+1}Rm*\nabla^{k+1}Rmdg.
\end{eqnarray*}
We know that
\[
\left|\int_MRm*\nabla^{k+1}Rm*\nabla^{k+1}Rmdg\right|\leq
C\max_M|Rm|\int_M|\nabla^{k+1}Rm|^2dg,
\]
and
\begin{eqnarray*}
&&\left|\int_M\nabla^iRm*\nabla^jRm*\nabla^nRmdg\right|\\
\quad &&\leq
\left\{\int_M|\nabla^iRm|^{(2k+4)/i}dg\right\}^{i/(2k+4)}
\left\{\int_M|\nabla^jRm|^{(2k+4)/j}dg\right\}^{j/(2k+4)}\left\{\int_M|\nabla^kRm|^2dg\right\}^{1/2}
\end{eqnarray*}
with $i+j=k+2.$ By the interpolation result (\ref{3-7}), we have
($i\neq 0$)
\[
\left\{\int_M|\nabla^iRm|^{(2k+4)/i}dg\right\}^{i/(2k+4)}\leq
C\max_M|Rm|^{1-i/(k+2)}\left\{\int_M|\nabla^{k+2}Rm|^2dg\right\}^{i/(2k+4)},
\]
and doing the same for $j$, it follows
\[
\left|\int_M\nabla^iRm*\nabla^jRm*\nabla^nRmdg\right|\leq
C\max_M|Rm|\left\{\int_M|\nabla^{k+2}Rm|^2dg\right\}^{1/2}\left\{\int_M|\nabla^kRm|^2dg\right\}^{1/2}.
\]
By the interpolation inequality (\ref{3-8}) and Young's
inequality,
\[
\int_M|\nabla^iRm|^2dg\leq
\epsilon\int_M|\nabla^{k+2}Rm|^2dg+C\int_M|Rm|^2dg
\]
for any $1\leq i<k+2.$ It follows that
\begin{equation}
\begin{split}
\frac{\p}{\p
t}\int_M|\nabla^kRm|^2dg&\leq-\int_M|\nabla^{k+2}Rm|^2dg+C\int_M|\nabla^{k+1}Rm|^2dg\\
&\quad+C\left\{\int_M|\nabla^{k+2}Rm|^2dg\right\}^{1/2}\left\{\int_M|\nabla^kRm|^2dg\right\}^{1/2}\\
&\leq-\int_M|\nabla^{k+2}Rm|^2dg+\epsilon\int_M|\nabla^{k+2}Rm|^2dg+C\int_M|Rm|^2dg\\
&\leq-\frac{1}{2}\int_M|\nabla^{k+2}Rm|^2dg+C\int_M|Rm|^2dg.
\end{split}
\end{equation}
\end{proof}

\section{Formation of singularities along the Calabi flow}
In this section we study the formation of singularities along the Calabi flow with uniformly bounded Sobolev constants.  
In particular we will prove Theorem \ref{1} and use it to prove some general properties.
Note that the Calabi flow is a fourth order parabolic equation, the
maximum principle is not applicable in general.  The tools we  use are
the  integral estimates proved in Section 3 and Sobolev inequalities.  
First we need a Sobolev inequality for tensors. First recall the Sobolev inequality on a compact Riemannian manifold $(M, g)$ of dimension $m$; 
there exists a constant $C_s=C_s(M, g)$ such that
\begin{equation}\label{E-4-1}
\|f\|_{L^{2m/(m-2)}}\leq C_s \left(\|\nabla f\|_{L^2}+Vol^{-1/m}\|f\|_{L^2}\right),
\end{equation}
where $Vol$ denotes the volume of $(M, g)$. Note that the Sobolev inequality \eqref{E-4-1} is scaling invariant. 
\begin{lemma}[Sobolev inequality for tensors]\label{L-6-2} Let $(M, g)$ be a
compact Riemannian manifold of dimension $m$ with fixed volume, and let $T$ be any
tensor on $M$. Then the Sobolev inequality holds for $T$, namely,
\[
\|T\|_{L^{2m/(m-2)}}\leq C_s(\|\nabla T\|_{L^2}+Vol^{-1/m}\|T\|_{L^2}).
\]
\begin{proof}
Denote $f=|T|$, the Sobolev inequality for functions gives that
\[
\|f\|_{L^{2m/m-2}}\leq C_s(\|\nabla f\|_{L^2}+Vol^{-1/m}\|f\|_{L^2}).
\]
Since $|\nabla f|=|\nabla |T||=|\nabla \langle T, T\rangle^{1/2}|,$
and when $|T|\neq 0$,
\[
|\nabla \langle T, T\rangle^{1/2}|=\frac{\langle \nabla T,
T\rangle}{|T|}\leq |\nabla T|.
\]
If $|T|0$ vanishes at some point, take $f=\sqrt{\langle T, T\rangle+\epsilon}$ and
apply the Sobolev inequality for $f$, then let $\epsilon
\rightarrow 0$. This completes the proof.
\end{proof}
\end{lemma}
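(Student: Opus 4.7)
The plan is to reduce the vector-valued inequality to the scalar Sobolev inequality \eqref{E-4-1} by applying it to the function $f=|T|$, and to control $|\nabla f|$ by $|\nabla T|$ using the Kato-type pointwise inequality. Concretely, wherever $|T|\neq 0$, one has
\[
|\nabla |T|| \;=\; \left|\nabla \langle T,T\rangle^{1/2}\right| \;=\; \frac{|\langle \nabla T, T\rangle|}{|T|} \;\leq\; |\nabla T|,
\]
by Cauchy--Schwarz. Plugging $f=|T|$ into \eqref{E-4-1} then gives the desired estimate, provided $|T|$ is sufficiently regular for the scalar inequality to apply.

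The only genuine issue is that $f=|T|$ need not be $C^1$ on the zero set of $T$, so one cannot directly invoke \eqref{E-4-1}. I would handle this by the standard regularization trick: for $\epsilon>0$ set
\[
f_\epsilon \;=\; \sqrt{\langle T,T\rangle + \epsilon},
\]
which is smooth on $M$ because $\langle T,T\rangle+\epsilon$ is smooth and bounded below by $\epsilon>0$. A direct computation gives
\[
|\nabla f_\epsilon| \;=\; \frac{|\langle \nabla T, T\rangle|}{\sqrt{\langle T,T\rangle+\epsilon}} \;\leq\; \frac{|\nabla T|\,|T|}{\sqrt{|T|^2+\epsilon}} \;\leq\; |\nabla T|.
\]
Applying the scalar Sobolev inequality to $f_\epsilon$ then yields
\[
\|f_\epsilon\|_{L^{2m/(m-2)}} \;\leq\; C_s\bigl(\|\nabla f_\epsilon\|_{L^2} + \mathrm{Vol}^{-1/m}\|f_\epsilon\|_{L^2}\bigr) \;\leq\; C_s\bigl(\|\nabla T\|_{L^2} + \mathrm{Vol}^{-1/m}\|f_\epsilon\|_{L^2}\bigr).
\]

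Finally I would let $\epsilon\to 0^+$. Since $0\le f_\epsilon - |T| \le \sqrt{\epsilon}$ uniformly on the compact manifold $M$, we have $f_\epsilon \to |T|$ uniformly, hence in $L^2$ and in $L^{2m/(m-2)}$. Passing to the limit on both sides gives the stated inequality. I do not expect any serious obstacle here; the only subtlety is the regularity at the zero set of $T$, and it is resolved cleanly by the $\epsilon$-regularization described above.
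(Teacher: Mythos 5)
Your proposal is correct and follows essentially the same route as the paper's own proof: apply the scalar Sobolev inequality to $f=|T|$, bound $|\nabla|T||\leq|\nabla T|$ by Cauchy--Schwarz, and handle the zero set of $T$ via the regularization $f_\epsilon=\sqrt{\langle T,T\rangle+\epsilon}$ before letting $\epsilon\to 0$. Your write-up is in fact slightly more careful than the paper's about justifying the passage to the limit.
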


Now by the integral estimates derived in Section 3, we can prove
\begin{lemma}\label{L-6-3}
If the curvature tensors and the Sobolev constants evolved are both uniformly bounded in $[0, T)$
($T\leq \infty$ ), then all derivatives of curvature are
bounded (independent of time $T$).
\end{lemma}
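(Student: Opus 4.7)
The plan is to combine the integral inequality of Theorem \ref{T-3-1}, the interpolation estimates (\ref{3-7})--(\ref{3-8}), and the tensor Sobolev inequality of Lemma \ref{L-6-2}. The two hypotheses $|Rm|\le K$ and bounded Sobolev constant $C_s$ enter at exactly the two stages where they are needed.

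First I would prove by induction on $k\ge 0$ that $y_k(t):=\int_M|\nabla^kRm|^2 dg$ is bounded uniformly on $[0,T)$. The base case $k=0$ is immediate: the Calabi flow preserves the K\"ahler class, hence the volume $V$, so $y_0\le K^2 V$. For the inductive step, Theorem \ref{T-3-1} gives
\[
y_k'(t)\le -\tfrac12\int_M|\nabla^{k+2}Rm|^2 dg+C\, y_0,
\]
where $C=C(n,k,K)$. Applying the interpolation inequality (\ref{3-8}) with the top order replaced by $k+2$ yields
\[
y_k\le C\left(\int_M|\nabla^{k+2}Rm|^2 dg\right)^{k/(k+2)} y_0^{2/(k+2)},
\]
so the dissipation term dominates a superlinear power of $y_k$:
\[
\int_M|\nabla^{k+2}Rm|^2 dg\ge C' y_k^{(k+2)/k}.
\]
The evolution inequality becomes a scalar ODE of the form $y_k'\le -C_1 y_k^{(k+2)/k}+C_2$ with $(k+2)/k>1$, whose solutions stay bounded for all time (once $y_k$ exceeds the threshold where $C_1 y_k^{(k+2)/k}>2C_2$, it must decrease). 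Thus $\sup_{t\in[0,T)}y_k(t)\le B_k$, with $B_k$ depending only on the initial data, $K$, $V$, and $k$.

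Next I would upgrade these $L^2$ bounds to pointwise bounds via Lemma \ref{L-6-2}. Fix $j$. Apply the tensor Sobolev inequality to $\nabla^j Rm$, then to $\nabla^{j+1}Rm$, and so on, producing estimates
\[
\|\nabla^j Rm\|_{L^{p_{\ell+1}}}\le C(C_s,V)\Bigl(\|\nabla^{j+\ell+1}Rm\|_{L^2}+\|\nabla^{j+\ell}Rm\|_{L^2}\Bigr)
\]
where $p_0=2$ and $p_{\ell+1}=mp_\ell/(m-p_\ell)$ until $p_\ell>m=2n$, at which point Morrey's embedding gives $C^0$ control. Since Step~1 uniformly bounds every $\|\nabla^l Rm\|_{L^2}$, we conclude $\sup_M|\nabla^j Rm|\le C_j$ for each $j$, uniformly in $t\in[0,T)$.

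The argument is a fourth order analogue of Hamilton's scheme for the Ricci flow; the only serious points are (i) identifying the correct interpolation that turns the dissipation term into a superlinear function of $y_k$, and (ii) ensuring that the iterated Sobolev embedding uses a constant independent of $t$. Point (i) is forced by the degree $k+2$ appearing in Theorem \ref{T-3-1}, and point (ii) is exactly the role of the hypothesis that the Sobolev constants of $g(t)$ are uniformly bounded. Neither step requires any curvature information beyond the $L^\infty$ bound on $Rm$, so the lemma follows.
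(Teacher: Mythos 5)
Your argument is correct, but it is not the route the paper takes, and the difference is worth noting. For the $L^2$ bounds on $\nabla^k Rm$ you close the differential inequality of Theorem \ref{T-3-1} by using the interpolation (\ref{3-8}) to convert the dissipation term $-\frac12\int_M|\nabla^{k+2}Rm|^2dg$ into a superlinear quantity $-C_1 y_k^{(k+2)/k}$, and then run an ODE comparison; the resulting bound is $\sup_t y_k\le\max\bigl(y_k(0),Y_*\bigr)$ and therefore depends on the derivative norms $\int_M|\nabla^kRm|^2dg$ of the \emph{initial} metric. The paper instead uses the time-weighted energy $F_k(t)=\sum_{i=0}^k t^i\int_M|\nabla^iRm|^2dg(t_0+t)$ on a unit time interval: the terms $i\,t^{i-1}\int_M|\nabla^iRm|^2dg$ produced by differentiating the weights are absorbed into the dissipation of the previous summand, yielding $F_k(1)\le C\int_M|Rm|^2dg(t_0)$. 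This is a parabolic smoothing estimate in the style of Bando--Shi: the derivative bounds at time $t_0+1$ depend only on $\sup|Rm|$ and $\int_M|Rm|^2$, with no reference to derivatives of the initial data. That extra strength is convenient where the lemma is actually used, namely on the rescaled flows $g_i(t)=Q_ig(t/Q_i^2+t_i)$ in the proof of Theorem \ref{1}, where one wants bounds depending only on the rescaled curvature bound; your version also suffices there (rescaling only shrinks the initial derivative norms), but it is the less portable statement. Conversely, your bound is uniform down to $t=0$, whereas the paper's argument only covers times $t\ge 1$ and implicitly relies on short-time smoothness for the initial layer. Your second step --- Kato's inequality plus iterated Sobolev embedding with the uniform constant $C_s$ to pass from $L^2$ bounds on all derivatives to pointwise bounds --- is exactly what the paper does, and at the same level of detail.
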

\begin{proof}
Recall that
\[
\frac{\p}{\p
t}\int_M|\nabla^kRm|^2dg\leq-\frac{1}{2}\int_M|\nabla^{k+2}Rm|^2dg+C\int_M|Rm|^2dg,
\]
if the curvature tensor is uniformly bounded by Theorem \ref{T-3-1}. For
any $t_0 \in [0, T-1)$  denote
\begin{eqnarray*}
F_k(t)=\sum^k_{i=0}t^i\int_M|\nabla^iRm|^2dg(t_0+t),
\end{eqnarray*}
where $t\in [0, 1]$. Then we will have
\begin{eqnarray*}
\frac{\p F_k}{\p
t}&=&\sum^k_{i=0}it^{i-1}\int_M|\nabla^iRm|^2dg+\sum^k_{i=1}t^i\frac{\p}{\p
t}\int_M|\nabla^iRm|^2dg\\
&\leq&\sum^k_{i=0}it^{i-1}\int_M|\nabla^iRm|^2dg+\sum^k_{i=1}t^i\left(-\frac{1}{2}\int_M|\nabla^{i+2}Rm|^2dg+C\int_M|Rm|^2dg\right)\\
&\leq&\sum^k_{i=0}t^{i}\left(-\frac{1}{2}\int_M|\nabla^{i+2}Rm|^2dg+(i+1)\int_M|\nabla^{i+1}Rm|^2dg\right)+C\int_M|Rm|^2dg\\
&\leq&\sum^k_{i=0}t^{i}\left(-\frac{1}{4}\int_M|\nabla^{i+2}Rm|^2dg+C\int_M|Rm|^2dg\right)+C\int_M|Rm|^2dg\\
&\leq&-\frac{1}{4}\sum^k_{i=0}t^{i}\int_M|\nabla^{i+2}Rm|^2dg+C\int_M|Rm|^2dg.
\end{eqnarray*}
It follows that
\begin{eqnarray*}
F_k(1)&=&F_k(0)+\int_0^1\frac{\p F_k}{\p t}dt\\
&\leq& \int_M|Rm|^2dg(t_0)+\int_0^1C\int_M|Rm|^2dg\\
&\leq&C\int_M|Rm|^2dg(t_0).
\end{eqnarray*}
Since $F_k(1)=\sum^k_{i=0}\int_M|\nabla^iRm|^2dg(t_0+1)$, it means
$L^2$ norms of all higher derivatives of curvature are bounded.
With uniformly bounded Sobolev constants,  it implies that all
higher derivatives of the curvature are uniformly bounded
independent of time by Sobolev embedding theorem.
\end{proof}
Now, we are ready to prove Theorem \ref{1}
\begin{proof}
Consider the Calabi flow in $[0, T)$ with uniformly bounded Sobolev constants
($T \leq\infty$). If the curvature blows up when
$t\rightarrow T,$ pick up a sequence $t_i\rightarrow T$, denote
\[Q_i=\max_{x\in M, t\leq t_i}|Rm|(x, t_i)=|Rm|(x_i, t_i).\]
Re-scale the metric
\[
g_i(t)=Q_ig(t/Q_i^2+t_i).
\]
Note that the Sobolev inequality is scaling invariant.  After
scaling, the curvature tensors are uniformly bounded. It follows from Lemma
\ref{L-6-3} that all the higher derivatives of the curvature are
uniformly bounded for any $g_i(t)$. It is also clear that 
bounded Sobolev constants with bounded curvature tensors  imply that
the injectivity radius is bounded away from zero at any point. One
can find a nice reference in \cite{Ye} for all details.  Then the pointed manifold $\{M, g_i(t), x_i\}$ converges subsequently
smoothly to $\{M_\infty, g_\infty(t), x_\infty\}$ in
Cheeger-Gromov sense. 
The proof is exactly the same as in the
Ricci flow shown by R. Hamilton \cite{Hamilton951}. 
In particular $\{g_\infty(t), -\infty<t\leq0\}$ is an ancient solution of the
Calabi flow
\[
\frac{\p \tilde g}{\p t}=\nabla\bar{\nabla}\tilde R
\]
on the complete manifold (non-compact) $M_\infty$.
Now we consider the convergence of the complex structure. By the blowing up process, there is a sequence of
compact set $K_i$, $K_i\subset K_{i+1}$, $\cup K_i=M_\infty$, and
a sequence of  diffeomorphisms $\Phi_i: K_i\rightarrow
\Phi_i(K_i)\subset M$,  \[\Phi_i^{*}(g_i)\rightarrow g_\infty,\] where the convergence is  smooth in $K_{i-1}$. 
Denote $J_i={\Phi^{-1}_i}_*\circ J\circ {\Phi_i}_*$. If necessary, by taking a subsequence, $J_i\rightarrow J_\infty$. It is clear that $J_\infty$ is still a complex structure which is compatible with $g_\infty(t)$ for any $t$.

When $(M, J)$ is a K\"ahler surface, we
can prove that $\{M_\infty, g_\infty(t)\}$ is actually extK 
since the Calabi energy is scaling invariant on K\"ahler
surface. For any fixed time $t\in (-\infty, 0)$, consider
$g_\infty(t)$ as the limit of $g_i(t/Q_i^2+t_i)$ when
$i\rightarrow \infty.$ We have
\begin{eqnarray*}
\int_t^0\int_{M_\infty}|R,_{\alpha\beta}|^2dg_\infty
dt&\leq&\lim_{i\rightarrow \infty}
\int_{t/Q_i^2+t_i}^{t_i}\int_M|R,_{\alpha\beta}|^2dg_i(s)ds\\
&=&\lim_{i\rightarrow
\infty}\{\cC(t/Q_i^2+t_i)-\cC(t_i)\}\\
&=&0.
\end{eqnarray*}
It gives  that for any  $s\in[t, 0]$,
\[
\int_{M_\infty}|R,_{\alpha\beta}|^2dg_\infty=0.
\]
It follows that $g_\infty(s)$ is an extK metric for any $s$, and so $(M_\infty, g_\infty(s))$ are all biholomorphic. Let $g_\infty=g_\infty(0)$.  
In particular on $(M_\infty, g_\infty)$, for any smooth  function $f$ with compact support,  the following Sobolev inequality holds for some constant $C_s=C_s(M_\infty, g_\infty)$,
\begin{equation}\label{E-S} 
\|f\|_{L^{n/n-1}}\leq C_s\|\nabla f\|_{L^2}.
\end{equation}
It is also clear that  the $L^2$ norm of curvature on $(M_\infty, g_\infty)$ is
bounded. By Chen-Weber's work on the moduli of extremal
metrics \cite{chen-weber} on K\"ahler surfaces,  $(M_\infty,
g_\infty)$  has ALE (asymptotically locally Euclidean) structure at infinity. We then finish the proof by showing the following lemma.
\end{proof}

\begin{lemma}\label{T-4-3}Let $(M_\infty, g_\infty, J_\infty)$ be an ALE extK metric with complex dimension $m>1$, then $(M_\infty,
g_\infty, J_\infty)$ is scalar flat.
\end{lemma}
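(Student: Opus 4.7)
The plan is to exploit the extremal condition to produce a holomorphic vector field on $M_\infty$ that decays at infinity, show by a Liouville-type rigidity that such a field on an ALE K\"ahler manifold of complex dimension $\geq 2$ must vanish identically, and then deduce $R\equiv 0$ from $\partial R\equiv 0$ together with the ALE decay of $R$. First I would record the standing ALE consequences: the flat model $\mathbb{C}^m/\Gamma$ has vanishing scalar curvature, so the ALE asymptotics $g_\infty=\delta+O(r^{-\tau})$ with matching derivative decay for some $\tau>0$ give $R=O(r^{-\tau-2})$, and each additional covariant derivative of $R$ gains a power of $r^{-1}$. The extremal condition $R_{,\bar\alpha\bar\beta}=0$ (for all $\alpha,\beta$) is precisely the statement that the $(1,0)$-vector field $X^\alpha := g^{\alpha\bar\beta}R_{,\bar\beta}$ is $J_\infty$-holomorphic; by the preceding decay estimates on $R$ and $\partial R$, we have $X\to 0$ at infinity.

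The heart of the argument is to prove that such a decaying holomorphic vector field on $(M_\infty,g_\infty,J_\infty)$ with $m\geq 2$ must vanish identically. My plan is to run this via asymptotic analysis on the ALE end. In the ALE coordinates, $J_\infty$ differs from the flat $J_0$ on $\mathbb{C}^m/\Gamma$ by $O(r^{-\tau})$ (with matching derivative decay), so the coordinate components $X^i$ satisfy an approximate Cauchy-Riemann system of the schematic form $\bar\partial_{J_0}X=(J_\infty-J_0)\ast\nabla X$, and standard elliptic theory in weighted H\"older spaces yields an asymptotic polynomial expansion of $X$ whose leading terms are honest $J_0$-holomorphic vector fields on the end $(\mathbb{C}^m\setminus B_R)/\Gamma$. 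For $m\geq 2$, Hartogs' theorem extends such fields to holomorphic fields on all of $\mathbb{C}^m/\Gamma$, whose components are thus polynomials in $z$ of nonnegative degree; the decay condition on $X$ forces every coefficient in the expansion to vanish, so $X$ decays at infinity faster than any polynomial rate. Unique continuation for the elliptic system $\bar\partial_{J_\infty}X=0$ (equivalently, the identity principle applied after one-point compactifying the ALE end to an orbifold point at infinity, where $X$ then extends holomorphically and vanishes to infinite order) gives $X\equiv 0$ on all of $M_\infty$.

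Once $X\equiv 0$, $\partial R\equiv 0$ on $M_\infty$, hence $\bar\partial R\equiv 0$ as well because $R$ is real, so $R$ is locally constant and, by connectedness, a constant; the ALE decay $R\to 0$ then forces that constant to be zero, giving scalar flatness. The principal obstacle is the Liouville/rigidity step of the second paragraph; it is also the only place where the hypothesis $m>1$ is used, entering through Hartogs extension (polynomial holomorphic vector fields on $\mathbb{C}^m/\Gamma$ have no nontrivial decaying representatives). Making the asymptotic expansion and unique-continuation argument rigorous requires the weighted-space analysis on ALE K\"ahler manifolds that underlies the Chen--Weber moduli theorem \cite{chen-weber} invoked in the proof of Theorem~\ref{1}, and this is where the real technical content of the lemma lies.
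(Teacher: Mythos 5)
Your proposal follows essentially the same route as the paper: the extremal condition makes $\nabla R$ (the real part of) a holomorphic vector field on the ALE end, Hartogs extension (the only place $m>1$ enters) together with the ALE decay kills it, and then $R$ is constant and hence zero. The paper executes the rigidity step far more directly than you do --- it lifts the end to $\mathbb{C}^m\setminus B(0,R)$, regards the components $f_i$ of the field as holomorphic functions there, extends them by Hartogs to all of $\mathbb{C}^m$, and applies the maximum principle to their harmonic real and imaginary parts (which tend to zero at infinity), so none of the weighted-space asymptotic expansion or unique-continuation-at-infinity machinery you sketch is actually invoked.
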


ALE spaces have been studied often in the
literature. A complete manifold $(X, g)$ is called
asymptotically locally Euclidean (ALE) end of order $\tau (\tau>0)$ if
there exists a finite subgroup $\Gamma\subset SO(n)$ acting freely
on $\mathbb{R}^n\backslash B(0, R)$ and a $C^{\infty}$
diffeomorphism $\Psi: X\rightarrow (\mathbb{R}^n\backslash B(0,
R))/\Gamma$ such that under this identification,
\begin{eqnarray*}
 g_{ij}&=&\delta_{ij}+O(r^{-\tau}),\\
\p^{|k|}g_{ij}&=&O(r^{-\tau-k}),
\end{eqnarray*}
for any partial derivative of order $k$ as $r\rightarrow \infty.$
We say an end is ALE is of order $0$ if we can find a coordinate
system as above with
\begin{eqnarray*}
 g_{ij}&=&\delta_{ij}+o(1),\\
\p^{|k|}g_{ij}&=&o(r^{-k}),
\end{eqnarray*}
as $r\rightarrow \infty.$ A complete, non-compact manifold $(X,
g)$ is called ALE is X can be written as the disjoint union of a
compact set and finitely many ALE ends. When $(X, g)$ is in
addition a K\"ahler manifold, $\Gamma\subset Gl(n/2, \mathbb{C})\cap SO(n)=U(n/2)$, $n=2m.$ In
particular one can show that an ALE K\"ahler manifold has just one
end, as results of Li and Tam \cite{Li-Tam}.\\

Now we are ready to prove Lemma \ref{T-4-3}.

\begin{proof} Let  $(M_\infty, g_\infty)$ be an ALE K\"ahler manifold with an extK metric $g_\infty$. Since
$\Gamma\subset U(m)$,  there exists an asymptotic coordinate
$\mathbb{C}^m\backslash B(0, R)/\Gamma$ on the ALE end. Through
$\pi: \mathbb{C}^{m}\backslash B(0, R)\rightarrow
\mathbb{C}^{m}\backslash B(0, R)/\Gamma$, the metric can be lifted
up to $\mathbb{C}^{m}\backslash B(0, R).$ If $g_\infty$ is extK, then $\nabla R$ is the
real part of a holomorphic vector, on $\mathbb{C}^m\backslash B(0,
R)$, and one can express $\nabla R$ as
\[
\sum_{i=1}^mf_i(z_1, \cdots, z_m)\frac{\p}{\p
z_i}+\overline{f_i(z_1, \cdots, z_m)}\frac{\p}{\p\bar z_i},
\]
where $f_i~(1\leq i\leq m)$ are holomorphic functions on
$\mathbb{C}^m\backslash B(0, R)$. By Hartgos' theorem, one can
extend $f_i$ to $\mathbb{C}^m$. Since the metric is ALE,  $|f_i|$ vanishes at infinity. Now the real part and imaginary
part of $f_i$ are both harmonic functions, the maximum principle
implies  $f_i\equiv 0$. So $\nabla R\equiv 0.$ It follows that
$R=const.$ The ALE condition implies that $R$ has to be zero.
\end{proof}

In the following we shall discuss some general properties along the Calabi flow using Theorem \ref{1}. First we define type I and type II singularities for the Calabi flow.
\begin{defi} Suppose the Calabi flow exists in $[0, T)$ for some finite time $T$ and the curvature blows up at $T$. A singularity is called type I if
\[
\limsup_{t\rightarrow T}
|Rm|^2(T-t)<\infty.
\] 
Otherwise it is called a type II singularity.
\end{defi}
\begin{prop}Let $T$ be the maximal existence time of the Calabi flow on a K\"ahler surface.
If $T<\infty$ and the Sobolev constant is uniformly bounded, then there is no type I singularity. Namely,
\begin{equation*}\label{6-1}\limsup_{t\rightarrow T}
|Rm|^2(T-t)=\infty.\end{equation*}
\end{prop}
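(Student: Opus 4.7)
The plan is to argue by contradiction, using Theorem \ref{1} together with the scale-invariance of the type I bound to force the blow-up limit to have identically vanishing curvature, which then contradicts the normalization of the blow-up.

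Assume toward contradiction that the singularity is of type I, so there is a constant $C_0$ with
\[
|Rm|^2(x,t) \leq \frac{C_0}{T-t}
\]
for all $x \in M$ and all $t$ sufficiently close to $T$. First I would apply Theorem \ref{1} to produce a sequence $(x_i, t_i)$ with $t_i \to T$ and $Q_i = |Rm|(x_i, t_i) = \max_{x,\, t\leq t_i}|Rm|(x, t) \to \infty$, so that the rescaled flows $g_i(s) = Q_i\, g(t_i + s/Q_i^2)$ converge smoothly (in the Cheeger-Gromov sense and up to diffeomorphisms of a base point $x_\infty$) to an ancient solution $(M_\infty, g_\infty(s), J_\infty)$ on $(-\infty, 0]$. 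Since $(M, J)$ is a K\"ahler surface, Theorem \ref{1} further asserts that $g_\infty(s) \equiv g_\infty(0)$ is a scalar-flat ALE K\"ahler surface; note that by smooth convergence $|Rm|_{g_\infty(0)}(x_\infty) = 1$.

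The core of the argument is a curvature decay estimate for the rescaled flow. Evaluating the type I bound at $(x_i, t_i)$ gives $Q_i^2(T - t_i) \leq C_0$. Hence for any $s \leq 0$ and any $x \in M$,
\[
|Rm|_{g_i(s)}(x) = Q_i^{-1}\,|Rm|_{g(t_i + s/Q_i^2)}(x) \leq Q_i^{-1}\sqrt{\frac{C_0}{T - t_i - s/Q_i^2}} = \sqrt{\frac{C_0}{Q_i^2(T-t_i) - s}} \leq \sqrt{\frac{C_0}{-s}}
\]
whenever $s < 0$. Passing to the smooth limit yields $|Rm|_{g_\infty(s)}(x) \leq \sqrt{C_0/(-s)}$ for every $x \in M_\infty$ and every $s < 0$.

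Finally, I would invoke the staticity $g_\infty(s) \equiv g_\infty(0)$ from Theorem \ref{1}: the inequality above reads
\[
|Rm|_{g_\infty(0)}(x) \leq \sqrt{\frac{C_0}{-s}} \qquad \text{for all } s < 0,
\]
and letting $s \to -\infty$ forces $Rm_{g_\infty(0)} \equiv 0$, contradicting $|Rm|_{g_\infty(0)}(x_\infty) = 1$. The only non-routine step is verifying that the type I bound, which is pointwise and global, transfers cleanly through the blow-up rescaling; but this is essentially built into the scaling behavior used in the proof of Theorem \ref{1}, so no new analytic input is required beyond what has already been established.
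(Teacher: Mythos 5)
Your proof is correct, and it extracts the contradiction by a slightly different route than the paper's. Both arguments share the same skeleton: rescale at points of maximal curvature, invoke Theorem \ref{1} to obtain a static, scalar-flat limit $(M_\infty, g_\infty)$ normalized so that $|Rm|_{g_\infty}(x_\infty)=1$. The paper then chooses the sequence $(x_i,t_i)$ so that $Q_i^2(T-t_i)$ converges exactly to the limsup value $C$, steps back one unit of rescaled time to $s=-1$, and uses staticity at that single time to show that $|Rm|^2(T-t)$ approaches $C+1$ along the times $t_i-Q_i^{-2}\to T$, contradicting the definition of $C$. You instead use the type I hypothesis only as a uniform upper bound $|Rm|^2\le C_0/(T-t)$, propagate it through the parabolic scaling to get $|Rm|_{g_i(s)}\le\sqrt{C_0/(Q_i^2(T-t_i)-s)}\le\sqrt{C_0/(-s)}$, and then let $s\to-\infty$ together with staticity to conclude the bubble is flat, contradicting the normalization at $x_\infty$. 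Your version is the standard ``type I ancient solutions are flat'' argument: it consumes staticity for all $s\le 0$, but it has the advantage of not requiring the blow-up sequence to realize the limsup exactly --- a point on which the paper's choice of $(x_i,t_i)$ (simultaneously attaining the running maximum and the limsup rate) quietly relies. Both proofs are complete; I see no gap in yours.
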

\begin{proof}
If not, then
\[\limsup_{t\rightarrow T} |Rm|^2(T-t)=C\]
for some constant $C$. We can choose a sequence $(x_i, t_i)$ such
that \[Q_i:=|Rm|(x_i, t_i)=\max_{0\leq s\leq t_i, y\in M}|Rm|(y, s)\]
and \[\lim_{i\rightarrow \infty} Q_i^2(T-t_i)=C.\]  Then
 \[\left\{M,
g_i(s)=Q_ig(t_i+sQ_i^{-2})\right\}
\]
converges to one parameter family of {\it maximal bubble}
$(M_\infty, g_\infty(s))$ evolved by the Calabi flow. By Theorem \ref{1}, $g_\infty(s)$ is scalar flat and  $g_\infty(s)\equiv g_\infty(0)$ for any $s$.
In particular,  for any $s\in (-\infty, 0]$,
\[\lim_{i\rightarrow \infty}Q_i^{-1}|Rm(x_i, t_i+sQ_i^{-2})|=1.\]
At time $t_i-Q_i^{-2} (s=-1)$, we have  
\[
\lim_{i\rightarrow \infty}|Rm(x_i, t_i-Q_i^{-2})|^2(T-(t_i-Q_i^{-2}))=\lim_{i\rightarrow \infty}Q_i^2(T-t_i)+1=C+1.
\]
It contradicts \[\limsup_{t\rightarrow T} |Rm|^2(T-t)=C.\]
It completes the proof.
\end{proof}

Theorem \ref{1} gives strong restrictions of possible singularities along the Calabi flow on a K\"ahler surface with  the uniformly bounded Sobolev constants. We actually expect that the longtime existence holds for the Calabi flow on
K\"ahler surfaces in this case. In this direction we can prove that

\begin{theo}\label{T-6-7} Consider the Calabi flow exists in $[0, T)$ on
K\"ahler surface with bounded Sobolev constants, and in addition if
we have
\[
\int_M|Rm|^pdg<\infty
\]
for any $p>2$, then $T=\infty$.
\end{theo}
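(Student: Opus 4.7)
The plan is a contradiction argument: assume $T<\infty$. Then by the extension result of \cite{chen-he} cited in the introduction, $\max_M|Rm|(\cdot,t)\to\infty$ as $t\to T$, so Theorem \ref{1} produces a blow-up limit $(M_\infty,g_\infty,J_\infty)$ that is a scalar-flat ALE K\"ahler surface. I will use the supercritical integral bound (read in the natural uniform-in-time sense $\sup_{t\in[0,T)}\int_M|Rm|^p\,dg<\infty$ for some fixed $p>2$) to force this bubble to be flat, contradicting the normalization of the curvature at the base point.

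Concretely, I would choose $(x_i,t_i)$ with $t_i\to T$ and $Q_i=|Rm|(x_i,t_i)=\max_{y\in M,\;s\le t_i}|Rm|(y,s)$, apply Theorem \ref{1} to obtain diffeomorphisms $\Phi_i:K_i\hookrightarrow M$ with $\Phi_i^*g_i\to g_\infty$ smoothly on each compact $K\subset M_\infty$, where $g_i:=Q_i\,g(t_i)$. Smooth convergence forces $|Rm_{g_\infty}|(x_\infty)=1$, so $g_\infty$ is not flat. On a K\"ahler surface (real dimension four) $|Rm|$ scales as $Q_i^{-1}$ and the volume form as $Q_i^{2}$, so
\[
\int_M|Rm_{g_i}|^p\,dg_i\;=\;Q_i^{\,2-p}\int_M|Rm_{g(t_i)}|^p\,dg(t_i)\;\le\;C\,Q_i^{\,2-p}\;\longrightarrow\;0,
\]
because $p>2$. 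Passing to the limit through the smooth Cheeger-Gromov convergence on any compact $K\subset M_\infty$,
\[
\int_K|Rm_{g_\infty}|^p\,dg_\infty\;=\;\lim_{i\to\infty}\int_{\Phi_i(K)}|Rm_{g_i}|^p\,dg_i\;=\;0,
\]
which forces $Rm_{g_\infty}\equiv 0$ on $M_\infty$ and contradicts $|Rm_{g_\infty}|(x_\infty)=1$.

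The content of the argument is just the observation that $p=2$ is the scale-invariant exponent for $|Rm|$ in real dimension four, so any $L^p$ bound with $p>2$ is incompatible with pointwise curvature concentration; the real work has already been done by the bubble analysis of Theorem \ref{1}. The only subtlety I anticipate is the interpretation of the hypothesis: if one reads it only as $\int_M|Rm|^p\,dg$ being finite at each time, rather than uniformly bounded on $[0,T)$, then one must first upgrade to a uniform bound before the rescaling step. This I would do by combining the monotonicity of the Calabi energy under the flow with the interpolation estimates of Section 3, which control higher Sobolev norms of $Rm$ in terms of $\|Rm\|_{L^2}$ and supremum bounds; once such a uniform $L^p$ control is in place, the scaling computation above concludes the proof.
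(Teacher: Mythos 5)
Your argument is correct, and it takes a genuinely more direct route than the paper. The paper does not exploit the supercritical scaling of $\int_M|Rm|^p\,dg$ at all: instead it proves Lemma \ref{L-6-8}, a differential inequality of the form $\frac{\p}{\p t}\int_M|\nabla Rm|^2dg\leq-\frac{1}{2}\int_M|\nabla^3Rm|^2dg+C(p,C_s)\bigl(\|Rm\|_{L^p}^{3p/(p-2)}+\|Rm\|_{L^p}^{3}\bigr)$, integrates it over the \emph{finite} time interval $[0,T)$ to obtain a uniform bound on $\int_M|\nabla Rm|^2dg$, and then observes that this quantity scales with a negative power of $Q_i$, so the bubble satisfies $\int_{M_\infty}|\nabla Rm_\infty|^2dg_\infty=0$; a locally symmetric scalar-flat ALE limit must be flat, contradicting the normalization $|Rm_\infty|(x_\infty)=1$. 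Your version replaces all of this with the one-line scaling computation $\int_M|Rm_{g_i}|^p dg_i=Q_i^{2-p}\int_M|Rm|^pdg\to 0$, which kills the bubble directly; it needs neither Lemma \ref{L-6-8} nor the finiteness of $T$ to integrate a differential inequality, so it in fact shows curvature cannot concentrate at any time (finite or infinite) under a uniform supercritical $L^p$ bound. Both proofs require reading the hypothesis as $\sup_{t\in[0,T)}\int_M|Rm|^pdg<\infty$ for some $p>2$ (pointwise-in-time finiteness is automatic on a compact manifold and says nothing), and the paper's own proof confirms this is the intended reading. Your closing suggestion for ``upgrading'' a non-uniform hypothesis via Calabi-energy monotonicity and the Section 3 interpolation inequalities would not work as stated --- those inequalities control $L^2$ norms of derivatives of $Rm$ in terms of $\max_M|Rm|$ or $\|Rm\|_{L^2}$, not $L^p$ norms of $Rm$ itself --- but since that step is unnecessary under the intended hypothesis, it does not affect the proof.
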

To prove this theorem, first we derive an estimate on
$W^{1, 2}$ norm of the curvature.
\begin{lemma}\label{L-6-8}The Calabi flow exists in $[0, T)$ with uniformly bounded Sobolev
constant. We have
\begin{equation*}
\frac{\p}{\p t}\int_M|\nabla
Rm|^2dg\leq-\frac{1}{2}\int_M|\nabla^3Rm|^2dg+C(p,
C_s)|Rm|^{3p/(p-2)}_{L^p}+C(p, C_s)|Rm|^3_{L^p}.
\end{equation*}
for any $p>2.$
\end{lemma}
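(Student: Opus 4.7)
The plan is to apply the evolution formula (3.5) with $k = 1$, which gives
\[
\frac{\p}{\p t}\int_M |\nabla Rm|^2\,dg = -\int_M |\nabla^3 Rm|^2\,dg + \sum_{i+j=3}\int_M \nabla^i Rm * \nabla^j Rm * \nabla Rm\,dg + \int_M Rm * \nabla^2 Rm * \nabla^2 Rm\,dg,
\]
and then to absorb the cubic curvature terms into $\tfrac12 \int |\nabla^3 Rm|^2\,dg$ plus polynomial powers of $\|Rm\|_{L^p}$, using the Sobolev inequality for tensors (Lemma 6.2, available uniformly in time by hypothesis) and the derivative-interpolation inequality (3.8).

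First, I would integrate by parts whenever a factor $\nabla^3 Rm$ appears in the cubic sum; this reduces every cubic term, up to a combinatorial constant, to one of the two prototypes
\[
I_1 = \int_M Rm * (\nabla^2 Rm)^2\,dg, \qquad I_2 = \int_M (\nabla Rm)^2 * \nabla^2 Rm\,dg.
\]
It then suffices to control $|I_1|$ and $|I_2|$ each by $\tfrac14\int |\nabla^3 Rm|^2\,dg$ plus the claimed polynomial in $\|Rm\|_{L^p}$.

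For $I_1$, H\"older's inequality with exponents $(p,\, 2p/(p-1),\, 2p/(p-1))$ gives $|I_1| \leq \|Rm\|_{L^p}\|\nabla^2 Rm\|_{L^{2p/(p-1)}}^{2}$. I would then interpolate the middle norm between $L^2$ and $L^4$, using Lemma 6.2 at the $L^4$-end in dimension four ($\|\nabla^2 Rm\|_{L^4} \leq C_s(\|\nabla^3 Rm\|_{L^2}+\|\nabla^2 Rm\|_{L^2})$) and (3.8) at the $L^2$-end, together with $\|Rm\|_{L^2}\leq C\|Rm\|_{L^p}$ from the fixed K\"ahler-class volume. This should yield a schematic bound $|I_1| \leq C(p,C_s)\|\nabla^3 Rm\|_{L^2}^{\alpha}\|Rm\|_{L^p}^{\beta}$ for a specific $\alpha\in(0,2)$ and $\beta>0$, and Young's inequality with conjugate exponents $2/\alpha$ and $2/(2-\alpha)$ then splits this as $\epsilon\|\nabla^3 Rm\|_{L^2}^2 + C_{\epsilon}\|Rm\|_{L^p}^{3p/(p-2)}$. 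The specific exponent $3p/(p-2)$ is the scale-invariant one in dimension four; one checks by the scaling $g\mapsto\lambda^2 g$ that $\int|\nabla^3 Rm|^2\,dg \sim \lambda^{-6}$ and $\|Rm\|_{L^p}^\gamma \sim \lambda^{\gamma(4/p - 2)}$, which balance precisely when $\gamma = 3p/(p-2)$. A parallel H\"older/Sobolev/interpolation argument applied to $I_2$ produces the non-scale-invariant residual $C(p,C_s)\|Rm\|_{L^p}^{3}$, with its presence reflecting the fixed-volume compact setting rather than the scaling heuristic. Choosing $\epsilon$ small enough in each estimate and summing yields the lemma.

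The hard part will be the careful exponent bookkeeping across the three interpolation tools (the dimension-four Sobolev inequality, the $L^q$-interpolation between $L^2$ and $L^4$ for the intermediate norm $\|\nabla^2 Rm\|_{L^{2p/(p-1)}}$, and the higher-derivative interpolation (3.8)), to ensure that $\alpha$ stays strictly below $2$ so that Young's inequality absorbs it into the coercive term. The restriction $p > 2$ enters precisely because $3p/(p-2)$ diverges as $p\to 2^+$, and it is essential that the final constant depends only on $p$ and $C_s$, with no appeal to any pointwise curvature bound, since Lemma \ref{L-6-8} will be used in regimes where no such bound is available.
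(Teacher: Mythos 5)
Your proposal follows essentially the same route as the paper: start from the $k=1$ evolution identity, integrate by parts to reduce to the two cubic prototypes $\int Rm*(\nabla^2 Rm)^2$ and $\int (\nabla Rm)^2*\nabla^2 Rm$, then combine H\"older with exponents $(p,\,2p/(p-1),\,2p/(p-1))$, the tensor Sobolev inequality of Lemma \ref{L-6-2} at the $L^4$ end, the interpolation inequalities of Section 3, and Young's inequality to absorb everything into $\tfrac12\int|\nabla^3 Rm|^2$ plus $C\|Rm\|_{L^p}^{3p/(p-2)}+C\|Rm\|_{L^p}^3$. The exponent bookkeeping you defer is exactly what the paper carries out (with $a=2-2/q$ and $3q/(2-q)=3p/(p-2)$), so the plan is correct and matches the paper's argument.
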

\begin{proof}
Recall the evolution equation
\begin{equation*}
\begin{split}
\frac{\p}{\p t}\int_M|\nabla
Rm|^2dg=&-\int_M|\nabla^3Rm|^2dg+\int_MRm*\nabla^2Rm*\nabla^2Rmdg\\&+\int_MRm*\nabla
Rm*\nabla^3Rmdg\\
=&-\int_M|\nabla^3Rm|^2dg+\int_MRm*\nabla^2Rm*\nabla^2Rmdg\\&+\int_M\nabla
Rm*\nabla Rm*\nabla^2Rmdg.
\end{split}
\end{equation*}
By H\"older inequality we know
\begin{equation*}
\begin{split}
\left|\int_MRm*\nabla^2Rm*\nabla^2Rmdg\right|
&\leq
\int_M|Rm||\nabla^2Rm|^2dg\\
&\leq \left\{\int_M|Rm|^pdg\right\}^{1/p}\left\{\int_M|\nabla
^2Rm|^{2q}dg\right\}^{1/q},
\end{split}
\end{equation*}
where $1/p+1/q=1.$
Using Lemma \ref{L-6-2} (Sobolev inequality for tensors), we obtain
\[
|\nabla^2Rm|_{L^4}\leq C_s(|\nabla^3Rm|_{L^2}+|\nabla^2Rm|_{L^2}).
\]
Combining the interpolation inequality \eqref{3-6} in Section 3, we can get
\begin{equation*}
\begin{split}
\left|\int_MRm*\nabla^2Rm*\nabla^2Rmdg\right|
&\leq
\int_M|Rm||\nabla^2Rm|^2dg\\
&\leq
\left\{\int_M|Rm|^pdg\right\}^{1/p}\left\{\int_M|\nabla
^2Rm|^{2q}dg\right\}^{1/q}\\
&\leq
|Rm|_{L^p}|\nabla^2Rm|^{2a}_{L^4}|\nabla^2Rm|^{2(1-a)}_{L^2}\\
&\leq
C|Rm|_{L^p}C_s(|\nabla^3Rm|_{L^2}+|\nabla^2Rm|_{L^2})^{2a}|\nabla^3Rm|^{4(1-a)/3}_{L^2}\\
&\leq
C|Rm|_{L^p}|\nabla^3Rm|^{2(2+a)/3}_{L^2}+C|Rm|_{L^p}|\nabla^3Rm|^{4/3}_{L^2},
\end{split}
\end{equation*}
where $1/2q=a/4+(1-a)/2,$ and so $a=2-2/q.$ It is easy to see
\[
C|Rm|_{L^p}|\nabla^3Rm|^{2(2+a)/3}_{L^2}\leq
\frac{1}{8}|\nabla^3Rm|^2_{L^2}+C|Rm|^{3q/(2-q)}_{L^p},
\]
and
\[
C|Rm|_{L^p}|\nabla^3Rm|^{4/3}_{L^2}\leq
\frac{1}{8}|\nabla^3Rm|^2_{L^2}+C|Rm|^3_{L^p}.
\]
It gives us that
\[
\left|\int_M\nabla Rm*\nabla Rm*\nabla^2Rmdg\right|\leq
\frac{1}{4}|\nabla^3Rm|^2_{L^2}+C|Rm|^{3p/(p-2)}_{L^p}+C|Rm|^3_{L^p}.
\]
We can also estimate
\begin{equation*}
\begin{split}
\left|\int_M\nabla Rm*\nabla
Rm*\nabla^2Rmdg\right|&\leq\int_M|\nabla^2Rm|\nabla
Rm|^2dg\\
&\leq\left\{\int_M|\nabla^2Rm|^qdg\right\}^{1/q}\left\{\int_M|\nabla
Rm|^{2r}dg\right\}^{1/r}.
\end{split}
\end{equation*}
Recall the interpolation inequality \eqref{3-6},
\[
\left\{\int_M|\nabla
Rm|^{2r}dg\right\}^{1/r}\leq\left\{\int_M|\nabla^2
Rm|^{s}dg\right\}^{1/s}\left\{\int_M| Rm|^{t}dg\right\}^{1/t},
\]
where $1/r=1/s+1/t.$ Choose $s=q, t=p$, it follows that
\[
\left|\int_M\nabla Rm*\nabla
Rm*\nabla^2Rmdg\right|\leq\left\{\int_M|\nabla^2Rm|^qdg\right\}^{2/q}\left\{\int_M|
Rm|^{p}dg\right\}^{1/p},
\]
where $2/q+1/p=1.$  Similarly we can estimate that
\[
\left|\int_M\nabla Rm*\nabla
Rm*\nabla^2Rmdg\right|\leq  \frac{1}{4}|\nabla^3Rm|^2_{L^2}+C|Rm|^{3p/(p-2)}_{L^p}+C|Rm|^3_{L^p}.
\]
Combining these estimates, 
it follows that
\[
\frac{\p}{\p t}\int_M|\nabla
Rm|^2dg\leq-\frac{1}{2}\int_M|\nabla^3Rm|^2dg+C(p,
C_s)|Rm|^{3p/(p-2)}_{L^p}+C(p, C_s)|Rm|^3_{L^p}.
\]
for any $p>2.$
\end{proof}
Now we are in the position to prove Theorem \ref{T-6-7}.
\begin{proof}
If $T<\infty$, Lemma \ref{L-6-8} gives that  $W^{1, 2}$ norm of the curvature is bounded by $L^p$ norm of curvature for any $p>2$.
If the curvature blows up, we can get a sequence $(x_i, t_i)$ such
that
\[Q_i=|Rm|(x_i,
t_i)=\max_{x\in M, t\leq t_i}|Rm|(x, t).\] Then
we know that $\{M, g_i(s)=Q_ig(t_i+Q^2_is), x_i\}$ converges
smoothly (in Cheeger-Gromov sense) to a {\it maximal bubble}
$\{M_\infty, g_\infty, x_\infty\}$. In particular $g_\infty(s )\equiv g_\infty(0)$ for any $s\in (-\infty, 0)$ and $g_\infty(0)$ is scalar flat. Now
\begin{eqnarray*}
\int_{M_\infty}|\nabla Rm_{\infty}|^2dg_\infty&\leq&
\lim_{i\rightarrow \infty}\int_{M}|\nabla Rm|_{g_i}^2dg_i\\
&\leq& \lim_{i\rightarrow \infty}Q_i^{-2}\int_{M}|\nabla
Rm|^2dg=0.
\end{eqnarray*}
It gives that
\[
\int_{M_\infty}|\nabla Rm_{\infty}|^2dg_\infty=0.
\]
It forces that $(M_\infty, g_\infty)$ is a flat metric. That is a contradiction.
\end{proof}

\section{Scalar flat ALE K\"ahler surface}
Suppose $(M_\infty, g_\infty, J_\infty)$ is a scalar flat ALE K\"ahler surface in Theorem \ref{1}. In this section we derive some properties of $(M_\infty, g_\infty)$, which will be used crucially in proving Theorem \ref{T-2}.  Many results in this section follow from \cite{chen-lebrun-weber}.
The following lemma is due to \cite{chen-lebrun-weber}.
\begin{lemma}\label{L-5-11}
$M_\infty$ is diffeomorphic to an open subset $U$ of $M$. If $b_1(M)=0$,  then $b_1(M_\infty)=b_3(M_\infty)=0$, and
$b_2(M_\infty)\leq b_{-}(M).$
\end{lemma}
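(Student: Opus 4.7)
The plan is to first realize $M_\infty$ as an open subset of $M$ via the blow-up diffeomorphisms, then attack the Betti-number statements with Mayer--Vietoris plus Poincar\'e--Lefschetz, and finally use a Weitzenb\"ock argument to get negative-definiteness of the intersection form, which yields the $b_2$ bound.

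For the first sentence I would use the Cheeger--Gromov convergence from the proof of Theorem~\ref{1}. That construction produces an exhaustion $K_i\subset K_{i+1}\subset M_\infty$ together with smooth diffeomorphisms $\Phi_i\colon K_i\to \Phi_i(K_i)\subset M$. Since $(M_\infty,g_\infty)$ is ALE with a single end modeled on $(\RR^4\setminus B_R)/\Gamma$, one fixes a compact core $\bar M_\infty^{\,0}$ with boundary $S^3/\Gamma$; for all sufficiently large $i$, $\Phi_i$ embeds a neighborhood of $\bar M_\infty^{\,0}$ smoothly into $M$. Gluing on the ALE end produces an open set $U\subset M$ that is diffeomorphic to $M_\infty$.

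Next write $M=\bar M_\infty\cup_{S^3/\Gamma} W$, where $\bar M_\infty$ denotes the compactification of $M_\infty$ by its boundary at infinity and $W=\overline{M\setminus \bar M_\infty^{\,0}}$. The cohomology of $S^3/\Gamma$ is
\[
H^0(S^3/\Gamma)=H^3(S^3/\Gamma)=\RR,\qquad H^1(S^3/\Gamma)=H^2(S^3/\Gamma)=0,
\]
so the Mayer--Vietoris sequence yields
\[
0\longrightarrow H^1(M)\longrightarrow H^1(\bar M_\infty)\oplus H^1(W)\longrightarrow 0,\qquad 0\longrightarrow H^2(M)\longrightarrow H^2(\bar M_\infty)\oplus H^2(W)\longrightarrow 0.
\]
The first sequence forces $b_1(M_\infty)=b_1(\bar M_\infty)=0$ as soon as $b_1(M)=0$. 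For $b_3(M_\infty)$ I would use Poincar\'e--Lefschetz duality on $\bar M_\infty$, which gives $H^3(\bar M_\infty)\cong H_1(\bar M_\infty,\partial)$. The vanishing of $H_1(\bar M_\infty)$ (from $b_1(M_\infty)=0$ and universal coefficients) together with the fact that $H_0(\partial)\to H_0(\bar M_\infty)$ is an isomorphism (both are connected) forces $H_1(\bar M_\infty,\partial)=0$, hence $b_3(M_\infty)=0$.

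For the bound $b_2(M_\infty)\le b_-(M)$ the strategy is to exhibit a $b_2(M_\infty)$-dimensional negative-definite subspace of $H^2(M;\RR)$. Because $H^1(S^3/\Gamma)=H^2(S^3/\Gamma)=0$, the long exact sequence of $(\bar M_\infty,\partial)$ gives $H^2_c(M_\infty)\cong H^2(M_\infty)$, so every class in $H^2(M_\infty)$ has a compactly supported representative. Extension by zero then defines an injective map $H^2(M_\infty)\cong H^2_c(M_\infty)\hookrightarrow H^2(M)$ whose image is precisely the $H^2(\bar M_\infty)$-summand of the Mayer--Vietoris splitting above; for compactly supported $\alpha,\beta$ on $M_\infty\subset M$ one has $\int_M \alpha\wedge\beta=\int_{M_\infty}\alpha\wedge\beta$, so the $Q_M$-restriction to this summand equals the intersection pairing on $M_\infty$. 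To see that the latter is negative definite, apply the Weitzenb\"ock formula for self-dual $2$-forms
\[
\Delta\alpha=\nabla^*\nabla\alpha-2W^+\alpha+\tfrac{R}{3}\alpha,
\]
combined with $R\equiv 0$, the standard identities $|W^+|^2=R^2/24$ on a K\"ahler surface, and the $L^2$ integration by parts justified by ALE decay: every $L^2$ harmonic $2$-form is forced to be anti-self-dual, so Hodge theory on ALE manifolds identifies $H^2(M_\infty;\RR)$ with an anti-self-dual space on which $\int_{M_\infty}\alpha\wedge\alpha=-\|\alpha\|_{L^2}^2<0$.

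The main technical obstacle will be the last step: justifying the Hodge-theoretic identification between $\mathrm{im}(H^2_c\to H^2)$ and the space of $L^2$ harmonic $2$-forms on a complete non-compact manifold, and checking that the Weitzenb\"ock integration by parts is valid for these forms given only the ALE decay. Both are known for ALE $4$-manifolds, but invoking them cleanly (and confirming the vanishing of boundary terms) is the delicate part; the purely topological Mayer--Vietoris computations are essentially routine once the open embedding $M_\infty\hookrightarrow M$ is in hand.
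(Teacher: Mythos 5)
Your proposal follows essentially the same route as the paper: realize $M_\infty$ as the interior of a compact piece $U$ with $\partial U = S^3/\Gamma$ via the Cheeger--Gromov diffeomorphisms, decompose $M \approx U\cup_{S^3/\Gamma}V$, run Mayer--Vietoris (using that $S^3/\Gamma$ is a rational homology sphere) to get $b_1=b_3=0$ and the splitting $H^2(M)=H^2(U)\oplus H^2(V)$, and conclude from negative definiteness of the intersection form of $M_\infty$ (forced by $W_+=0$ for a scalar-flat K\"ahler surface). The only difference is that you spell out the negative-definiteness step with the Weitzenb\"ock formula and $L^2$ Hodge theory on ALE spaces, which the paper simply asserts; your version is correct and fills in that detail.
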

\begin{proof} $(M_\infty, g_\infty)$ is
obtained by a pointed Gromov-Hausdorff limit of re-scaled versions
of $M$, and after scaling the curvature is uniformly bounded.
Therefore the convergence is smooth (in Cheeger-Gromov sense) in
any compact subset of $M_\infty$, by applying suitable
diffeomorphisms. Since $M_\infty$ is a scalar flat ALE K\"ahler
manifold with one end, $M_\infty$ is diffeomorphic to the interior
of a compact domain $U\subset M_\infty$ with smooth boundary
$S^3/\Gamma$. And $U$ can then  be mapped diffeomorphically into
$M$, resulting in a decomposition \[M\approx U\cup_{S^3/\Gamma}V
\] where $U$ and $V$ are manifolds with boundary, $M_\infty\approx
Int(U)$, and $\p U=\p V=S^3/\Gamma.$  We have the exact sequence
\[0\rightarrow \Omega^{*}(M)\rightarrow \Omega^{*}(U)\oplus \Omega^{*}(V)\rightarrow \Omega^{*}(S^3/\Gamma)\rightarrow
0.\] If $b_1(M)=0$, the Mayer-Vietories sequence implies that both
$U$ and $V$ have $b_1=b_3=0,$ while
\[H^{2}(M, \mathbb{R})=H^{2}(U, \mathbb{R})\oplus H^{2}(V, \mathbb{R}).\]
Since the analogous statement hold similarly for homology, the
intersection form of $M_\infty=Int(U)$ is just the restriction of
the intersection form of $H^{2}(M)$ to the linear subspace
$H^{2}(U)\subset H^2(M)$. But the intersection form of $M_\infty$
is negative ($W_{+}=0$), hence 
$b_2(M_\infty)\leq b_{-}(M).$
\end{proof}
First of all $(M_\infty, J_\infty)$ cannot have trivial topology.
\begin{lemma}[\cite{chen-lebrun-weber}]\label{L-6-12}
Let $(M_\infty, g_\infty)$ be a simply connected scalar flat ALE K\"ahler
metric, then $b_2(M_\infty)\neq 0.$
\end{lemma}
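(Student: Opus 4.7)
The plan is to argue by contradiction. Suppose $b_2(M_\infty) = 0$. Since $M_\infty$ is simply connected, $H^2(M_\infty;\mathbb{R}) = 0$, so the first Chern class of $J_\infty$ vanishes in de Rham cohomology and the Ricci form $\rho$ of $g_\infty$ is exact. I would then exploit the standard K\"ahler surface identity $s = 2\,\Lambda_\omega \rho$: the scalar-flat hypothesis makes $\rho$ a primitive $(1,1)$-form, and a primitive $(1,1)$-form on a K\"ahler surface is automatically anti-self-dual. Combined with the Bianchi identity $d\rho = 0$, anti-self-duality forces $d\ast\rho = -d\rho = 0$, so $\rho$ is harmonic.

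Next I would invoke $L^2$-Hodge theory on the ALE end. The curvature of an ALE K\"ahler metric of order $\tau>0$ decays like $r^{-\tau-2}$, which is $L^2$ in real dimension four; hence $\rho \in L^2$. Standard $L^2$-Hodge theory on ALE $4$-manifolds (Dodziuk, Joyce, and others) identifies the space of $L^2$-harmonic anti-self-dual two-forms with a subspace of $H^2(M_\infty;\mathbb{R})$ in the simply connected case. Since $b_2(M_\infty) = 0$, this forces $\rho \equiv 0$, i.e., $g_\infty$ is Ricci-flat K\"ahler.

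A simply connected Ricci-flat K\"ahler surface has holonomy in $SU(2) = Sp(1)$, hence is hyperk\"ahler. By Kronheimer's classification \cite{Kronheimer1, Kronheimer2} a simply connected ALE hyperk\"ahler $4$-manifold with $b_2 = 0$ is isometric to flat $\mathbb{R}^4$. However, the maximal bubble produced in Theorem \ref{1} satisfies $|Rm|(x_\infty,0) = 1$ by the blow-up normalization, so $g_\infty$ cannot be flat, and we obtain the required contradiction.

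The main obstacle is the $L^2$-Hodge step: verifying that the Ricci form has sufficient decay to be genuinely $L^2$, and then appealing to the correct version of Hodge theory on ALE ends that matches $L^2$-harmonic anti-self-dual two-forms with a topologically defined space vanishing when $b_2(M_\infty) = 0$. Once that identification is in hand, the topological classification closes the argument.
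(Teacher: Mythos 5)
The paper does not actually prove this lemma --- it is quoted from \cite{chen-lebrun-weber} --- and your argument correctly reconstructs the proof given there: scalar-flatness makes the Ricci form a closed, primitive (hence anti-self-dual and co-closed) $(1,1)$-form which lies in $L^2$ by the ALE decay, $L^2$-Hodge theory on the ALE end identifies $L^2$-harmonic two-forms with (the image of compactly supported classes in) $H^2(M_\infty;\mathbb{R})=0$ so that $\rho\equiv 0$, and a simply connected Ricci-flat K\"ahler (hence hyperk\"ahler) ALE surface with $b_2=0$ is flat $\mathbb{R}^4$ by Kronheimer's classification. Your final step is also the right one to flag: as literally stated the lemma admits flat $\mathbb{C}^2$ as a counterexample, so the contradiction genuinely requires the non-flatness of the maximal bubble, which is supplied by the blow-up normalization $|Rm|(x_\infty,0)=1$ in Theorem \ref{1}.
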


The following lemma appears also in \cite{chen-lebrun-weber}
implicitly. 
\begin{lemma}[Lagrangian condition] If $M_\infty$ contains some
compact 2-sphere $S$, then M contains a sequence of 2-spheres
$S_i$ corresponding to $S$. Moreover,  the homology class of $S_i$
satisfies that
\[
[\o][S_i]\rightarrow 0,
\]
when $i\rightarrow \infty$, which we call Lagrangian condition for
the sequence of $S_i$.
\end{lemma}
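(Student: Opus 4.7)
The plan is to use the pointed Cheeger–Gromov convergence set up in the proof of Theorem \ref{1}, together with the fact that the Kähler class $[\omega]$ is preserved along the Calabi flow, to transfer the compact sphere $S \subset M_\infty$ back to $M$ and then read off the homological pairing with $[\omega]$ from the obvious scaling.

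First I would recall that $(M_\infty, g_\infty, J_\infty)$ is the smooth limit, in the Cheeger--Gromov sense, of the rescaled pointed manifolds $(M, Q_i g(t_i), x_i, J)$, realized by an exhaustion $K_i \subset K_{i+1} \subset M_\infty$ together with diffeomorphisms $\Phi_i : K_i \to \Phi_i(K_i) \subset M$ such that $\Phi_i^* (Q_i g(t_i)) \to g_\infty$ and $\Phi_i^{*} J \to J_\infty$ smoothly on each compact subset of $M_\infty$. Consequently the associated Kähler forms converge: $\Phi_i^*(Q_i \omega(t_i)) \to \omega_\infty$ smoothly on compacta. Since $S$ is compact, for all large $i$ it sits inside $K_i$, and I would define $S_i := \Phi_i(S) \subset M$; this is an embedded $2$-sphere for $i$ large.

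Next I would use that the Calabi flow preserves the Kähler class, i.e.\ $[\omega(t_i)] = [\omega] \in H^2(M, \mathbb{R})$ for every $i$; only the potential evolves. Hence the homological pairing is
\[
\int_{S_i} \omega(t_i) \;=\; [\omega]\cdot [S_i].
\]
Pulling back by $\Phi_i$ and multiplying by the scaling factor $Q_i$ gives
\[
\int_{S} \Phi_i^*\bigl(Q_i \omega(t_i)\bigr) \;=\; Q_i\, [\omega]\cdot [S_i].
\]
On the other hand, smooth convergence of the Kähler forms on the compact set $S$ forces the left-hand side to converge to $\int_S \omega_\infty = [\omega_\infty]\cdot[S]$, which is a finite real number.

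Combining these two observations, $Q_i\,[\omega]\cdot[S_i]$ converges to a finite limit as $i \to \infty$. Since $Q_i \to \infty$ by construction, this is only possible if $[\omega]\cdot[S_i] \to 0$, which is precisely the claimed Lagrangian condition. The only step that requires any care is verifying the smooth convergence of the Kähler forms on $S$ (so that integration commutes with the limit); this follows directly from the smooth convergence of both $g_i$ and $J_i$ on compact subsets, so there is no essential analytic obstacle in this lemma—the content is really the matching of the scaling $Q_i \to \infty$ against a finite limiting pairing.
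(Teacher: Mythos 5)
Your argument is correct and follows essentially the same route as the paper: push $S$ into $M$ via the Cheeger--Gromov diffeomorphisms $\Phi_i$ and play the divergent scaling factor $Q_i$ against a finite limiting quantity on the compact set $S$. The only cosmetic difference is that the paper bounds $\left|[\omega]\cdot[S_i]\right|$ by $\mathrm{Area}_{g(t_i)}(S_i)$ via Wirtinger's inequality, whereas you evaluate the pairing directly as $\int_{S_i}\omega(t_i)$ using invariance of the K\"ahler class; both reduce to the same scaling observation.
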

\begin{proof} $(M_\infty, g_\infty)$ is constructed as the
pointed limit of $(M, g_i)$ after blowing up. If $M_\infty$
contains a compact 2-sphere $S$, then $(M, g_i)$ contains a
compact 2-sphere for each $i$, and after blowing up, the image of
$S_i$ converges to $S$. Since the scaling factor goes to infinity
as $i\rightarrow \infty$, the area of $S_i$ with respect to $g_i$
must tend to zero. By Wirtinger's inequality, we end up with
\[
\left|[S_i][\o_i]\right|\leq Area_{g_i}(S_i)\rightarrow 0.
\]
Hence
\[
[\o][S_i]=[\o_i][S_i]\rightarrow 0.
\]
It means $S_i$ behaves like a Lagrangian class with respect to $[\o]$ when
$i$ is big.
\end{proof}

The classification of ALE scalar flat K\"ahler metrics on complete
surfaces might be too ambitious. But we know much better if $(M_\infty, g_\infty)$ admits a toric structure. Following \cite{chen-lebrun-weber} (Proposition 16),  we have
\begin{lemma}\label{L-5-12}Let $(M, [\omega])$ be a compact toric surface. Suppose the Calabi flow with the initial metric which is invariant under toric action, if the curvature tensor is not uniformly bounded, then we can form a maximal bubble $(M_\infty, g_\infty)$. 
Moreover $(M_\infty, g_\infty)$ is toric and $H_2(M_\infty,
\mathbb{Z})$ is generated by holomorphic  $\mathbb{CP}^1s$
embedded in $M_\infty$.
\end{lemma}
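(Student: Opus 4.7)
The strategy is to propagate the torus symmetry through both the Calabi flow and the bubbling procedure, and then invoke standard toric surface theory for the homology statement. Existence of the maximal bubble already follows from Theorem \ref{1}. The Calabi flow is equivariant under biholomorphic isometries of the initial data: if $\phi \in \Aut(M,J)$ satisfies $\phi^{*}\omega_{0}=\omega_{0}$, then $\phi^{*}\omega(t)$ also solves the Calabi flow with initial data $\omega_{0}$, and by uniqueness $\phi^{*}\omega(t)=\omega(t)$ for all $t\in[0,T)$. Applied to the $T^{2}$-action on $(M,J,\omega_{0})$, each $\omega(t)$ is therefore $T^{2}$-invariant, and so is every rescaled metric $g_{i}(s)=Q_{i}\,g(t_{i}+s/Q_{i}^{2})$.

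Next I transfer the $T^{2}$-action to the bubble. Since $|Rm|$ is $T^{2}$-invariant, its maximum set is $T^{2}$-invariant, and one may move the base point freely within a $T^{2}$-orbit. By passing to a subsequence, I reduce to the situation where $x_{i}$ converges to a $T^{2}$-fixed point $p^{*}\in M$. This reduction relies on the bubble being ALE with a single end (Theorem \ref{1} combined with Li--Tam), so the concentration of curvature is at an isolated locus; combined with $T^{2}$-invariance and the fact that the $T^{2}$-fixed points of a smooth compact toric surface are the finitely many vertices of the moment polytope, the concentration point can be taken to be $T^{2}$-fixed. Because the Killing fields for the $T^{2}$-action vanish at $x_{i}$, the Cheeger--Gromov convergence upgrades to a convergence of Killing fields, and yields a holomorphic, isometric $T^{2}$-action on $(M_{\infty},g_{\infty},J_{\infty})$ fixing $x_{\infty}$. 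Its linearization at $x_{\infty}$ agrees with the linearization of the original $T^{2}$-action at $p^{*}$ on $T_{p^{*}}M$, which is effective; hence the $T^{2}$-action on $M_{\infty}$ is effective and $(M_{\infty},g_{\infty},J_{\infty})$ is a toric K\"ahler surface.

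For the homological statement, I use that on any smooth toric surface the closures of the one-dimensional complex $T^{2}$-orbits are holomorphic, smoothly embedded rational curves. For the ALE toric K\"ahler surface $M_{\infty}$, Li--Tam provides a unique end isomorphic near infinity to $(\mathbb{C}^{2}\setminus B(0,R))/\Gamma$ with $\Gamma\subset U(2)$. The non-compact one-dimensional toric orbits all escape into this end and contribute nothing to $H_{2}$, while the compact ones are embedded holomorphic $\mathbb{CP}^{1}$'s. A direct fan-theoretic description of the toric resolution of $\mathbb{C}^{2}/\Gamma$, or equivalently a Mayer--Vietoris computation on the compact core $U\subset M_{\infty}$ in the spirit of Lemma \ref{L-5-11}, then shows that these finitely many $\mathbb{CP}^{1}$'s generate $H_{2}(M_{\infty},\mathbb{Z})$.

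The main obstacle is the subsequence reduction in the second step: one must ensure that the base point can be chosen so as to converge to a $T^{2}$-fixed point, so that the limiting isometric action is a genuine compact $T^{2}$-action of full rank, rather than a non-compact $\mathbb{R}^{2}$-action of smaller effective compact rank arising from the rescaled orbits escaping to infinity. The key ingredient is combining the $T^{2}$-invariance of the curvature norm with the one-end ALE and finite total curvature properties of the bubble established in Theorem \ref{1}, together with the finiteness and isolation of the $T^{2}$-fixed locus of the toric surface $M$.
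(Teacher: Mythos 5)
The paper does not actually prove this lemma; it is stated as an import of Proposition~16 of \cite{chen-lebrun-weber}, where the analogous result is established for sequences of extremal metrics. Your opening step --- that uniqueness of solutions of the Calabi flow forces every $\omega(t)$, and hence every rescaled $g_i$, to be $T^2$-invariant once $\omega_0$ is --- is correct and is in fact a supplement genuinely needed here, since along the flow the metrics are not extremal and Calabi's symmetry theorem does not apply. The final homological step (compact torus-invariant curves of a smooth toric ALE surface generate $H_2$, via the fan of the resolution of $\mathbb{C}^2/\Gamma$ or a Mayer--Vietoris argument on the compact core) is also sound, \emph{granted} that the bubble is toric.

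The gap is exactly at the step you flag as the main obstacle, and it is asserted rather than closed. To pass the $T^2$-action to the pointed limit you need $\sup_{\phi\in T^2} d_{g_i}(x_i,\phi(x_i))$ to stay bounded, i.e.\ the $T^2$-orbit of $x_i$ must have diameter $O(Q_i^{-1/2})$ with respect to $g(t_i)$. Your reduction ``$x_i$ converges to a $T^2$-fixed point $p^*$'' is (i) itself unjustified --- $|Rm|$ being $T^2$-invariant only says its maximum is attained on a whole orbit, which may perfectly well be the $2$-dimensional free orbit, nowhere near a vertex of the polytope --- and (ii) in any case measured in the wrong metric: even if $x_i\to p^*$ in $M$, the rescaled distance $Q_i^{1/2}\,d_{g(t_i)}(x_i,p^*)$ can diverge, in which case $p^*$ escapes to infinity in the pointed limit, the Killing fields do not vanish near $x_\infty$, and the circle orbits can open up so that the limit carries only an $\mathbb{R}\times S^1$ or $\mathbb{R}^2$ isometric action, or an action whose compact rank has dropped. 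Neither one-endedness nor finite total energy of the bubble, as invoked, forces the curvature to concentrate at rescaled-bounded distance from the finite fixed-point set; that is precisely the content to be proved. A repair closer to \cite{chen-lebrun-weber} argues on the limit instead: $(M_\infty,g_\infty)$ is non-flat ALE, so its isometry group is compact (it preserves the compact set where $|Rm|\geq \epsilon$), hence the closure of the subgroup generated by the limiting Killing fields is a torus; one must then separately exclude a drop of rank to conclude an effective $T^2$-action. As written, your proposal does not carry out either route.
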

We can understand $(M_\infty, g_\infty)$ in more detail  by its toric structure. 
If  $b_2(M_\infty)=1$, $H_2(M_\infty, \mathbb{Z})$ is generated by a holomorphic $\mathbb{CP}^1s$ $E$ with self intersection $(-k)$, then the group $\Gamma=\mathbb{Z}_k$ and
\[
c_1=\frac{k-2}{k}[E].
\] 
Hence
\begin{equation}\label{5-1}
\int_{M_\infty}|Ric_0|^2dg_\infty=8\pi^2\frac{(k-2)^2}{k}.
\end{equation}
If  $b_2(M_\infty)=2$, then there is a basis for $H_2(M_\infty,
\mathbb{Z})$, represented by a pair of totally geodesics and
holomorphic $\mathbb{CP}^1s$ $E_1, E_2$, in which the intersection
form becomes\[\left(
\begin{array}{cl}
-k  \hspace{2mm}&1\\
1   \hspace{2mm}&-l\\
\end{array}
\right)\]for some positive $k\geq 2$ and $l\geq 1$  $ (k\geq l).$
Moreover $\Gamma=\mathbb{Z}_{kl-1}$ and
\[
c_1=\left(1-\frac{l+1}{kl-1}\right)[E_1]+\left(1-\frac{k+1}{kl-1}\right)[E_2].
\]
Hence
\begin{eqnarray}\label{5-2}
\int_{M_\infty}|Ric_0|^2dg_\infty=8\pi^2\left\{k\left(1-\frac{l+1}{kl-1}\right)^2+l\left(1-\frac{k+1}{kl-1}\right)^2\right\}\nonumber\\
-16\pi^2\left(1-\frac{l+1}{kl-1}\right)\left(1-\frac{k+1}{kl-1}\right).
\end{eqnarray}

If $b_2(M_\infty)=3,$ there is a basis for $H_2(M_\infty,
\mathbb{Z})$, represented by a triple of totally geodesic
$\mathbb{CP}^1s$ $E_1, E_2, E_3$, in which the intersection form
is given by
\[\left(
\begin{array}{cl}
-i  \hspace{3mm}&1 \hspace{7mm} 0\\
1   \hspace{4mm}&-j \hspace{5mm}1\\
0   \hspace{5mm}&1 \hspace{5mm} -k\\
\end{array}
\right),\] for some positive $i, j, k.$  Moreover, 
\[
c_1=\left(1-\frac{jk}{ijk-i-k}\right)[E_1]+\left(1-\frac{k+i}{ijk-i-k}\right)[E_2]+\left(1-\frac{ij}{ijk-i-k}\right)[E_3].
\]
Hence 
\begin{eqnarray}\label{5-3}
&&\int_{M_\infty}|Ric_0|^2dg_\infty=\nonumber\\ && 8\pi^2\left\{\left(1-\frac{jk}{ijk-i-k}\right)^2i
+\left(1-\frac{k+i}{ijk-i-k}\right)^2j+\left(1-\frac{ij}{ijk-i-k}\right)^2k\right\}\nonumber\\
&&\quad-16\pi^2\left(1-\frac{k+i}{ijk-i-k}\right)\left(2-\frac{j(i+k)}{ijk-i-k}\right).
\end{eqnarray}

\section{Proof of Theorem \ref{T-2}}
It is clear from last section  that if a  maximal bubble is formed along the Calabi flow, it has to be very restrictive, in particular with toric assumption. One certainly expects that such a bubble cannot be formed along the Calabi flow, in particular for some special cases. This actually leads to the fact that the curvature cannot blow up along the Calabi flow. In the following we prove that the Sobolev constants are uniformly bounded and  the bubble cannot be formed for the example in Theorem \ref{T-2}.
The main tools to rule out bubbles are the Gauss-Bonnet
formula and the signature formula. 
For a compact smooth 4-manifold $(M, g)$, the Gauss-Bonnet formula
is
\begin{equation*}
\frac{1}{8\pi^2}\int_M\left(|W_{+}|^2+|W_{-}|^2+\frac{R^2}{24}-\frac{|
Ric_0|^2}{2}\right)dg=\chi(M),
\end{equation*}
and the signature formula is
\begin{equation*}
\frac{1}{12\pi^2}\int_M(|W_{+}|^2-|W_{-}|^2)dg=\tau(M). \end{equation*} Here
$R$ is the scalar curvature and $Ric_0$ is the traceless Ricci
curvature. If $M$ admits an orientation-compatible almost complex
structure, then
\[
c_1^2=2\chi+3\tau.
\]
The K\"ahler condition implies
\begin{equation*}
\int_M|W_{+}|^2dg=\int_M\frac{R^2}{24}dg.
\end{equation*}
Suppose $M=\mathbb{CP}^2\sharp 3\overline{\mathbb{CP}^2}$, we get
\begin{equation}\label{6-1}
\int_M|Ric_0|^2dg=\frac{1}{4}\int_MR^2dg-8\pi^2c_1^2=\frac{1}{4}\int_MR^2dg-48\pi^2,
\end{equation}
and
\begin{equation}\label{6-2}
\int_M|W_-|^2dg=\frac{1}{24}\int_MR^2dg+24\pi^2.
\end{equation}

If $(M_\infty, g_\infty)$ is
any ALE 4-manifold with finite group $\Gamma\subset SO(4)$ at
infinity, then the Gauss-Bonnet formula becomes
\[
\frac{1}{8\pi^2}\int_{M_\infty}\left(|W_{+}|^2+|W_{-}|^2+\frac{R^2}{24}-\frac{|
Ric_0|^2}{2}\right)dg_\infty=\chi(M_\infty)-\frac{1}{|\Gamma|}
\]
and the signature formula becomes\[
\frac{1}{12\pi^2}\int_{M_\infty}(|W_{+}|^2-|W_{-}|^2)dg_\infty=\tau(M_\infty)+
\eta(S^3/\Gamma),
\]
where $\chi(M_\infty)$ is the Euler characteristic of non-compact
manifold $M_\infty$ and $\eta(S^3/\Gamma)$ is called $\eta$
invariant. When $(M_\infty, g_\infty)$ is scalar flat K\"ahler,
the formulas can be reduced to
\begin{equation}\label{6-3}
\frac{1}{8\pi^2}\int_{M_\infty}\left(|W_{-}|^2-\frac{|Ric_0|^2}{2}\right)d
g_\infty=\chi(M_\infty)-\frac{1}{|\Gamma|}
\end{equation}
and
\begin{equation}\label{6-4}
-\frac{1}{12\pi^2}\int_{M_\infty}|W_{-}|^2d\mu_{g_\infty}=\tau(M_\infty)+\eta(S^3/
\Gamma).
\end{equation}

Note that the $L^2$ norm of curvature is scaling-invariant in 4
dimension. If $(M_\infty, g_\infty)$ is a { maximal bubble} of
$(M, g)$, we have
\begin{equation}\label{6-5}\int_M|Ric_0|^2dg\geq
\int_{M_\infty}|Ric_0|^2dg_\infty,\end{equation} and
\begin{equation}\label{6-6} \int_M|W_{-}|^2dg\geq
\int_{M_\infty}|W_{-}|^2dg_\infty.\end{equation} 

To prove Theorem \ref{T-2}, we first show that the Sobolev constants are uniformly bounded along the Calabi flow.
\begin{lemma}
Let $[\omega]=3H-1/2([E_1]+[E_2]+[E_3])$ on $\mathbb{CP}^2 \sharp 3 \overline{\mathbb{CP}^2}$. Suppose the initial metric $\omega_0\in [\omega]$ satisfies
\[
\int_MR^2dg_0<32\pi^2(6+25/11),
\] 
then the Sobolev constants of $\omega(t)$ are uniformly bounded from above once the Calabi flow exists. 
\end{lemma}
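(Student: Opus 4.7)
The plan is to exploit the monotonicity of the Calabi energy along the flow, convert the resulting $L^2$ curvature control into a Sobolev constant bound by a contradiction/bubble-extraction argument, and close the contradiction using the bubble classification of Section~5.

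\emph{Step 1 (a priori $L^2$ curvature control).} Since the Calabi flow is the gradient flow of $\int_M R^2\,dg$, the Calabi energy is non-increasing, so the initial hypothesis persists along the flow:
\[
\int_M R^2\,dg(t)\;<\;32\pi^2\bigl(6+\tfrac{25}{11}\bigr) \quad\text{as long as the flow exists.}
\]
Via the K\"ahler Gauss--Bonnet identity \eqref{6-1} this immediately yields $\int_M |Ric_0|^2\,dg(t) < 8\pi^2\cdot\tfrac{25}{11}$, and via \eqref{6-2} a corresponding uniform upper bound on $\int_M |W_-|^2\,dg(t)$.

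\emph{Step 2 (concentration-compactness / blowup).} Suppose for contradiction that there is a sequence of times $t_n$ in the existence interval along which $C_s(\omega(t_n))\to\infty$. Because $[\omega]$ (hence the volume $[\omega]^2/2$) is fixed, this blowup must arise from local volume collapse at small scales, which by the uniform $L^2$ curvature bound of Step~1 forces curvature concentration at points. Rescale the metrics by the Sobolev blowup factor and apply a standard $\varepsilon$-regularity/blowup argument in the spirit of Anderson and Tian--Viaclovsky (paralleling the proof of Theorem \ref{1} itself), using the scale-invariance of $\int |Rm|^2\,dg$ in dimension four, to extract a non-trivial pointed Gromov--Hausdorff limit $(M_\infty, g_\infty, J_\infty)$. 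By the same argument as in the proof of Theorem \ref{1}, $g_\infty$ is scalar-flat K\"ahler, and by the Chen--Weber compactness theorem it has ALE structure at infinity.

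\emph{Step 3 (exclusion of the bubble).} Since the toric and $\mathbb{Z}_3$ symmetries of the initial data are preserved by the Calabi flow, they pass to the limit, so by Lemma~\ref{L-5-12} the bubble is toric with $H_2(M_\infty,\mathbb{Z})$ generated by holomorphic $\mathbb{CP}^1$s embedded in $M_\infty$. Hence $b_2(M_\infty)\in\{1,2,3\}$ and its curvature energy is given by one of \eqref{5-1}, \eqref{5-2}, \eqref{5-3}. The Lagrangian condition of Section~5 (bubbles arise from sequences of 2-spheres in $M$ with $[\omega]\cdot[S_i]\to 0$), combined with $[\omega]=3H-\tfrac12(E_1+E_2+E_3)$ and the $\mathbb{Z}_3$-invariance, severely restricts the admissible intersection matrices. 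I would verify case by case that every admissible configuration forces the bubble to carry curvature energy at least $8\pi^2\cdot\tfrac{25}{11}$, which via the semicontinuity inequality \eqref{6-5} contradicts Step~1.

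The hard part is the bubble accounting in Step~3: the constant $25/11$ has been tuned precisely so that the strict inequality $\int_M |Ric_0|^2 dg < 8\pi^2\cdot\tfrac{25}{11}$ excludes the minimal-energy ALE K\"ahler bubble compatible with the toric, $\mathbb{Z}_3$, and Lagrangian constraints. This strategy parallels Chen--LeBrun--Weber \cite{chen-lebrun-weber}, but we lack their a priori bound on $\sup|R|$ and must rely purely on the scale-invariant $L^2$ curvature data coming from the monotonicity of the Calabi energy.
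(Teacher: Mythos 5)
Your approach contains a circularity that I do not see how to repair. In Step 2 you propose to prove the Sobolev bound by contradiction, extracting a bubble ``paralleling the proof of Theorem \ref{1}''. But the blowup machinery of Theorem \ref{1} takes the uniform Sobolev bound as a \emph{hypothesis}: it is precisely the bounded Sobolev constant, together with bounded curvature after rescaling, that yields the injectivity radius lower bound (non-collapsing) needed for smooth pointed Cheeger--Gromov convergence. If $C_s(\omega(t_n))\to\infty$ you have no non-collapsing, and a scale-invariant $L^2$ curvature bound in dimension four does \emph{not} by itself force curvature concentration rather than collapse --- ruling out collapse is exactly the content of the lemma you are trying to prove. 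So the bubble you want to extract in Step 2 need not exist, and the exclusion argument of Step 3 never gets off the ground. Relatedly, your reading of the constant $25/11$ is off: it is not tuned to a minimal bubble energy but to the generalized Tian cone condition --- the hypothesis $\int_M R^2\,dg_0 < 32\pi^2(6+25/11)$ is exactly $\int_M R^2\,dg_0 < 32\pi^2 c_1^2 + \tfrac13\underline{R}^2 V$, which is what the direct argument requires. (The bubble accounting you describe is indeed part of the paper, but it is a \emph{separate, later} step in the proof of Theorem \ref{T-2}, carried out only after the Sobolev bound is already in hand.)

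The paper's proof is a direct a priori estimate with no blowup analysis. One considers the Yamabe functional of the conformal class, uses the curvature identities on a K\"ahler surface to obtain $Y_{[g]}^2 \ge 96\pi^2 c_1^2 - 2\int_M R^2\,dg$ (inequality \eqref{6-8}), and observes that the energy hypothesis is equivalent to $Y_{[g]}^2 > \int_M (R-\underline{R})^2\,dg$. A minimizing-sequence argument shows $Y_{[g]}>0$; then splitting $R=\underline{R}+(R-\underline{R})$ in the Yamabe inequality \eqref{6-7} and applying Cauchy--Schwarz to $\int_M (R-\underline{R})u^2\,dg$ gives
\[
\|u\|_{L^4}^2 \le \frac{6}{Y_{[g]}-\|R-\underline{R}\|_{L^2}}\,\|\nabla u\|_{L^2}^2 + \frac{\underline{R}}{Y_{[g]}-\|R-\underline{R}\|_{L^2}}\,\|u\|_{L^2}^2,
\]
an explicit Sobolev bound. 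Monotonicity of the Calabi energy (your Step 1, which is correct) propagates this estimate to all later times. Note that this argument uses neither the toric structure, nor the $\mathbb{Z}_3$ symmetry, nor any classification of ALE spaces.
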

\begin{proof}
The proof is a generalization of Tian's idea for positive constant scalar curvature metrics on K\"ahler surfaces \cite{Tian}, \cite{Tian-Viaclovsky}. Suppose $g$ is a K\"ahler metric, the Yamabe constant for the conformal class $[g]$ takes the formula
\[
Y_{[g]}=\inf_{u>0}\frac{\int_M(6|\nabla u|^2+R_{g}u^2)dg}{(\int_Mu^4dg)^{1/2}}.
\]
So for any $u>0$, 
\begin{equation}\label{6-7}
Y_{[g]}\left(\int_Mu^4dg\right)^{1/2}\leq \int_M(6|\nabla u|^2+R_{g}u^2)dg.
\end{equation}
Note that the inequality (\ref{6-7}) holds also for any $u$ since $|\nabla |u||\leq |\nabla u|$ at $u\neq 0$. 
A calculation \cite{Tian-Viaclovsky, chen-lebrun-weber} shows that
\begin{equation}\label{6-8}
Y_{[g]}^2\geq 96\pi^2c_1^2-2\int_MR^2dg. 
\end{equation}
If  the Calabi energy satisfies
\[
\int_MR^2dg<48\pi^2c_1^2,
\]
one can get a priori positive lower bound for $Y_{[g]}^2$. Once $Y_{[g]}>0$ and $R_{g}$ is bounded a priori, the Sobolev constant is bounded from above by (\ref{6-7})
a priori. These two conditions can be checked directly for extremal metrics (or constant scalar curvature metrics), see examples in \cite{chen-lebrun-weber}. 

By a careful analysis, one can generalize this idea to the Calabi flow without any bound on scalar curvature.
Suppose that $g_0\in [\omega]_{1/2}$ on  $\mathbb{CP}^2 \sharp 3 \overline{\mathbb{CP}^2}$ satisfies 
\begin{equation}\label{6-9}
\int_MR^2dg_0<32\pi^2(6+25/11).
\end{equation}
Let $\underline{R}$ be the average of the scalar curvature and let $V$ be the volume of $M$ for any metric in $(M, [\omega])$, then
\[
\underline{R}^2 V=32\pi^2 (75/11).
\] 
It is direct to check that \eqref{6-9} is equivalent to 
\[
\int_M R^2_0dg_0<32\pi^2c_1^2+\frac{1}{3}\underline{R}^2 V,
\]

which we can rewrite as
\[
\int_M(R_{g_0}-\underline{R})^2dg_0<96\pi^2c_1^2-2\int_MR^2_{g_0}dg_0.
\]
By (\ref{6-8}), we get that
\begin{equation}\label{6-10}
Y_{[g_0]}^2>\int_M(R_{g_0}-\underline{R})^2dg_0.
\end{equation}
First we show that $Y_{[g_0]}$ has to be positive.  Pick up a minimizing sequence $u_i$,  and there exist positive $\epsilon_i\rightarrow 0$ such that
\[
Y_{[g_0]}+\epsilon_i=\frac{\int_M(6|\nabla u_i|^2+R_{g_0}u^2_i)dg_0}{(\int_Mu^4_idg_0)^{1/2}}.
\]
It follows that
\[
(Y_{[g_0]}+\epsilon_i)|u_i|^2_{L^4}=6\int_M|\nabla u_i|^2dg_0+\int_MR_{g_0}u_i^2dg_0.
\]
So we can get that
\begin{equation}\label{6-11}
(Y_{[g_0]}+\epsilon_i)|u_i|^2_{L^4}-\int_M(R_{g_0}-\underline{R})u_i^2dg=6|\nabla u_i|^2_{L^4}+\underline{R}|u_i|^2_{L^2}. 
\end{equation}
If $Y_{[g_0]}$ is negative, by (\ref{6-10}), the left hand side of (\ref{6-11}) is negative for some $i$ sufficient large, while the right hand side is positive. Contradiction.

If $Y_{[g_0]}$ is positive, we can rewrite (\ref{6-7}) as
\[
|u|_{L^4}^2-\frac{1}{Y_{[g_0]}}\int_M(R_{g_0}-\underline{R})u^2dg_0\leq 6 Y_{[g_0]}^{-1}|\nabla u|^2_{L^2}+\underline{R}Y_{[g_0]}^{-1}|u|^2_{L^2}.
\]
By the Cauchy-Schwarz inequality, it follows that
\[
|u|_{L^4}^2\leq \frac{6}{Y_{[g_0]}-|R_{g_0}-\underline{R}|_{L^2}}|\nabla u|^2_{L^2}+\frac{\underline{R}}{Y_{[g_0]}-|R_{g_0}-\underline{R}|_{L^2}}|u|_{L^2}. 
\]
The sobolev constant on a four dimensional Riemannian manifold is defined to be the smallest constant $C_s$  such that
\[
|u|_{L^4}^2\leq C_s(|\nabla u|^2_{L^2}+Vol^{-1/2}|u|^2_{L^2}).
\]
Note that this definition is scaling invariant.  In our case,
we can compute that $\underline{R}\sqrt{Vol}=20\pi\sqrt{6/11}$ for $[\omega]_{1/2}$ and it is scaling invariant. 
So we can get that
\[
C_s\leq 20\pi\sqrt{6/11} (Y_{[g_0]}-|R_{g_0}-\underline{R}|_{L^2})^{-1}.
\]
Since the Calabi energy is decreasing,  the above estimate certainly holds along the Calabi flow. 
\end{proof}

Now we are ready to show the details to rule out bubbles. 
\begin{proof}Suppose a maximal bubble $(M_\infty, g_\infty)$ is formed. First we examine the group action  $\mathbb{Z}_3$. Let $M$ be the blown up of $\mathbb{CP}^2$ at the points $[1: 0: 0]$, $[0: 1:0]$ and 
$[0:0:1]$. Then $\mathbb{Z}_3$ is generated by the permutation 
\[
P([z_0: z_1: z_2])=[z_1: z_2 :z_0].
\] 
Note that $(M_\infty, g_\infty)$ arises by scaling the interior of a small domain $U_j$. Now move this domain by the action of $\mathbb{Z}_3$ to obtain the two domains $U_j^{'}, U_{j}^{''}$. If after scaling, the distance for $U_j$ to $U_{j}^{'}$ (and so the distance $U_{j}^{'}$ to $U_{j}^{''}$) is finite, then Cheeger-Gromov limit includes the limits of $U_{j}^{'}, U_{j}^{''}$. Then $(M_\infty, g_\infty)$ is invariant under the induced $\mathbb{Z}_3$ action. Otherwise, three copies of $(M_\infty, g_\infty)$ will form and therefore we have
\begin{equation}\label{6-12}
3\int_{M_\infty}|W_{-}|^2dg_\infty<\int_M|W_{-}|^2dg.
\end{equation}
and \begin{equation}\label{6-13}
3\int_{M_\infty}|Ric_0|^2dg_\infty<\int_M|Ric_0|^2dg.
\end{equation}
In other words, if (\ref{6-12}) or (\ref{6-13}) is not satisfied, then $(M_\infty, g_\infty)$ is invariant under the induced $\mathbb{Z}_3$ action.  This symmetry will simplify the calculation a lot.

By Lemma \ref{L-5-11},  $b_2(M_\infty)\leq 3$ and $(M_\infty, g_\infty)$ satisfies all constraints considered in Section 5. 
We analyze case by case.

Case 1: $b_2(M_\infty)=1$. Then $(M_\infty, g_\infty)$ contains a holomorphic $\mathbb{CP}^1$ with self-intersection $-k$.
Suppose $(M_\infty, g_\infty)$ is not invariant under the induced action of $\mathbb{Z}_3$. By (\ref{5-1}) and (\ref{6-3}), we get that
\begin{equation}\label{6-14}
\int_{M_\infty}|Ric_0|^2dg_\infty=8\pi^2\frac{(k-2)^2}{k}
\end{equation}
and
\begin{equation}\label{6-15}
\int_{M_\infty}|W_-|^2dg_\infty=4\pi^2\frac{k^2+2}{k}. 
\end{equation}

By (\ref{6-2}), (\ref{6-12}) and (\ref{6-15}), we get that
\begin{equation}\label{6-16}
12\pi^2\frac{k^2+2}{k}<\frac{1}{24}\int_MR^2dg+24\pi^2.
\end{equation} However there is no solution of (\ref{6-16}) if the initial Calabi energy 
\[
\int_MR^2dg<258.9\pi^2.
\]  

It implies that $(M_\infty, g_\infty)$ is invariant under the induced $\mathbb{Z}_3$ action. So $(M_\infty, g_\infty)$ is diffeomorphic to a domain which is invariant under the action $\mathbb{Z}_3$. It follows that $M$ contains a sequence of smoothly embedded 2-sphere $S_i$ of self-intersection $-k$, which is invariant under $\mathbb{Z}_3$.  By (\ref{6-1}), (\ref{6-5}) and (\ref{6-14}), we can get that
\[
8\pi^2\frac{(k-2)^2}{k}<\frac{1}{4}\int_MR^2dg-48\pi^2.
\]
It follows that $1\leq k\leq 5$.
Express the homology class $[S_i]$ as
\[
[S_i]=mH-n(E_1+E_2+E_3). 
\]
The Lagrangian condition gives that
\begin{equation}\label{6-17}
3m-3n/2\rightarrow 0,
\end{equation}
while the self-intersection condition gives that
\[
m^2-3n^2=-k.
\]
Suppose that $m, n$ are nonnegative, then we have $m<n$. It follows that $2n^2<k\leq 5$. We get that $n=1, m=0$.  Contradiction with (\ref{6-17}). \\

Case 2: $b_2(M_\infty)=2$.  If $|\Gamma|=1$, $H_2(M_\infty, \mathbb{Z})$ has intersection form
\[\left(
\begin{array}{cl}
-2  \hspace{2mm}&1\\
1   \hspace{2mm}&-1\\
\end{array}
\right).\]
By (\ref{5-2}) (with $k=2, l=1$) and (\ref{6-3}),
we compute
\[
\int_{M_\infty}|Ric_0|^2dg_\infty=16\pi^2,
\]
and 
\[
\int_{M_\infty}|W_-|^2dg_\infty=24\pi^2.
\]
If $|\Gamma|\geq 2$, by (\ref{6-3}), we compute
\[
\int_{M_\infty}|W_-|^2dg_\infty\geq 20\pi^2. 
\]
In either case, (\ref{6-12}) does not hold and so $(M_\infty, g_\infty)$ is invariant under the induced $\mathbb{Z}_3$ action. 
So $M$ contains a sequence of two embedded 2 spheres $S_i, \tilde S_i$ which are both invariant under the action of $\mathbb{Z}_3$ with the intersection form
\[\left(
\begin{array}{cl}
-k  \hspace{2mm}&1\\
1   \hspace{2mm}&-l\\
\end{array}
\right).\]
We can express the homology classes $[S_i], [\tilde S_i]$ as
\[
[S_i]=m_1H-n_1(E_1+E_2+E_3),
\]
and 
\[
[\tilde S_i]=m_2H-n_2(E_1+E_2+E_3).
\]
The Lagrangian condition gives that (for $S_i$)
\begin{equation}\label{6-18}
3m_1-3n_1/2\rightarrow 0.
\end{equation}

The self-intersection condition gives that (for $S_i$)
\begin{equation}\label{6-19} m_1^2-3n_1^2=-k.\end{equation}
Note that if $(m_1, n_1)$ is a solution, $(-m_1, -n_1)$ is also a solution. So we can assume that $m_1>0$ since $m_1$ cannot be zero.
 By (\ref{6-18}), $n_1\geq m_1+1$. By (\ref{6-19}) we get that $k=3n_1^2-m_1^2\geq 3\times 2^2-1=11$. Similarly $l\geq 11$.

However, by (\ref{6-5}) and (\ref{5-2}), we get that
\begin{equation}\label{6-20}
k\left(1-\frac{l+1}{kl-1}\right)^2+l\left(1-\frac{k+1}{kl-1}\right)^2-2\left(1-\frac{l+1}{kl-1}\right)\left(1-\frac{k+1}{kl-1}\right)<2.1.
\end{equation}
For $k, l\geq 11$, (\ref{6-20}) cannot hold.\\

Case 3: $b_2(M_\infty)=3$.  By (\ref{6-3}), we get that
\[
\int_{M_\infty}|W_-|^2dg_\infty\geq 24\pi^2.
\]
By the same argument in Case 2, $(M_\infty, g_\infty)$ is invariant under the induced action of $\mathbb{Z}_3$. It implies that $M$ contains a sequence of three embedded 2-sphere $S_{1a}, S_{2a}, S_{3a} $ which are invariant under the action of $\mathbb{Z}_3$. Suppose the intersection form is given by
\[\left(
\begin{array}{cl}
-i  \hspace{3mm}&1 \hspace{7mm} 0\\
1   \hspace{4mm}&-j \hspace{5mm}1\\
0   \hspace{5mm}&1 \hspace{5mm} -k\\
\end{array}
\right),\] 
then the same argument in Case 2 implies that $i, j, k\geq 11$. However by (\ref{6-5}) and (\ref{5-3}), 
\begin{eqnarray}\label{6-21}\left(1-\frac{jk}{ijk-i-k}\right)^2i
+\left(1-\frac{k+i}{ijk-i-k}\right)^2j+\left(1-\frac{ij}{ijk-i-k}\right)^2k\nonumber\\
-2\left(1-\frac{k+i}{ijk-i-k}\right)\left(2-\frac{j(i+k)}{ijk-i-k}\right)<2.1.
\end{eqnarray}
For $i, j, k\geq 11$, (\ref{6-21}) cannot hold.\\

By the case by case analysis, $(M_\infty, g_\infty)$ cannot be formed. Then the curvature cannot blow up along the Calabi flow. It follows that the flow exists for all time and converges to an extremal metric in Cheeger-Gromov sense. Namely
for any $t_i\rightarrow \infty$, there is a sequence of
diffeomorphism $\Psi_i$ such that $(M, \Psi_i^{*}g_i=g(t_i),
{\Psi_i}_{*}J)$ converge smoothly to $(M, g_\infty, J_\infty)$,
where $g_\infty$ is extremal with respect to $J_\infty$. But $(M,
J_\infty)$ might not be bi-holomorphic to $(M, J)$.

We then finish our proof by showing $(M, J)$ is actually biholomorphic to $(M, J_\infty).$ In general this is a very hard problem. The proof follows from
\cite{chen-lebrun-weber} (Theorem 27) by using the toric  condition carefully and the
classification of complex surface. Theorem 27 in \cite{chen-lebrun-weber} states only for $M\sim\mathbb{CP}^2\sharp 2\overline{\mathbb{CP}^2}$ but the proof holds for all toric Fano surfaces. The key is that in the limiting process, the torus action converges and 
 $(M, g_\infty, J_\infty)$ is still toric. Moreover, the 2-torus action for $(M,
g_\infty, J_\infty)$ is holomorphic with respect to $J_\infty$. The readers can refer to \cite{chen-lebrun-weber} for details.  
We shall sketch a proof as follows. 
 When $M\sim\mathbb{CP}^2\sharp3\overline{\mathbb{CP}^2}$, each of holomorphic curves $H, E_1, E_2, E_3$ is the fixed point set of the isometric action of some circle action of
2-torus, and so each is totally geodesic with respect to the
metrics along the Calabi flow. By looking at the corresponding
fixed points set of the limit action of circle subgroups, we can
find corresponding  totally geodesic 2-sphere in $(M, g_\infty, J_\infty)$ which are the limits of the image of these submanifolds. Moreover, these
limit 2-spheres are holomorphic with respect to $J_\infty$ and the
homological intersection numbers of these holomorphic spheres  do not vary. Namely, we have still three holomorphic $\mathbb{CP}^1$s with self-intersection $-1$ as the images of the original exceptional divisors $E_1, E_2, E_3$.  Thus, by blowing down the images of $E_1, E_2, E_3$ and applying the
classification of the complex surface, we conclude that $(M,
J_\infty)$ is  biholomorphic to $\mathbb{CP}^2$  blowup three generic points. So there exists a
diffeomorphism $\Psi$ such that $\Psi_{*} J=J_\infty$. So
$\Psi^{*}g_\infty$ is an extremal metric in the class $[\omega]$ for
$(M, J)$. 
\end{proof}

\section{Appendix}
If $(M, g, J)$ is an extK metric on a compact complex surface, the
Calabi functional takes the value
\begin{equation*}
\begin{split}\cC(g)&=\int_MR^2dg\\
&=\underline{R}^2\int_Mdg+\int_M(R-\underline{R})^2dg\\
&=32\pi^2\frac{(c_1\cdot [\omega])^2}{[\omega]^2}+\|\cF\|^2,
\end{split}
\end{equation*}
where $\|\cF\|$ is the norm of the Calabi-Futaki invariant of a Mabuchi-Futaki invariant metric \cite{Futaki-Mabuchi}, see \cite{chen05} for the definition of this norm.  We define a functional $\cA([\omega])$ depending only the K\"ahler class
\begin{equation}\label{A-E-1}
\cA([\omega])=32\pi^2\frac{(c_1\cdot [\omega])^2}{[\omega]^2}+\|\cF\|^2.
\end{equation}
According to Chen \cite{chen05}, Donaldson \cite{Donaldson05}, $\cA([\omega])$ provides a natural lower bound of the Calabi functional and it is realized by an extK metric if there exists such a metric in the class $[\omega].$ 

A key ingredient of our result is to bound the Sobolev constant along the Calabi flow. In \cite{Tian}, \cite{Tian-Viaclovsky}, Tian made a shrewd observation that one can bound the Sobolev constants a priori for positive cscK metrics on K\"ahler surfaces with $c_1>0$ if the K\"ahler class satisfies
\begin{equation}\label{A-E-2}
c_1^2(M)-\frac{2}{3}\frac{(c_1(M)\cdot[\omega])^2}{[\omega]^2}>0.
\end{equation}
 This observation was generalized to extK metrics on K\"ahler surfaces with $c_1>0$ \cite{chen-weber} if the K\"ahler class satisfies the condition
\begin{equation}\label{A-E-3}
48\pi^2c_1^2(M)-\cA([\o])>0.
\end{equation}
Note that if the Futaki invariant is zero ($\|\cF\|=0$), \eqref{A-E-3} is reduced to \eqref{A-E-2}.   We define Tian's cone to be the set of K\"ahler classes which satisfy \eqref{A-E-2} and the generalized Tian's cone to be the set of K\"ahler classes which satisfy \eqref{A-E-3}. 

To generalize these ideas to the Calabi flow, we need that the Calabi energy is bounded by
\begin{equation}\label{A-E-4}
\int_MR^2dg<\cB([\omega]),
\end{equation}
where the functional $\cB([\omega])$ is defined to be
\begin{equation}\label{A-E-5}
\cB([\omega])=32\pi^2\left(c_1^2+\frac{1}{3}\frac{(c_1\cdot[\omega])^2}{[\omega]^2}\right)
+\frac{1}{3}\|\cF\|^2.
\end{equation}
Note that the Calabi functional has a natural lower bound $\cA([\omega])$. If \eqref{A-E-4} holds, we have 
\[
\cA([\omega])<\cB([\omega]).
\]
It is equivalent to  \[48\pi^2c_1^2>\cA([\omega]),\] which is exactly the generalized Tian's cone condition.\\

Consider the K\"ahler classes $[\omega]_x=3H-x(E_1+E_2+E_3)$ on $\mathbb{CP}^2\sharp 3\overline{\mathbb{CP}^2}$. It is not hard to check that
 $[\omega]_x$ satisfies the generalized Tian's cone condition for any $x\in (0, 3/2)$. Also one can compute
 \[
 \cB([\omega]_x)=192\pi^2+32\pi^2\frac{(3-x)^2}{3-x^2}. 
 \]
We can actually prove similar results as in Theorem \ref{T-2}.
\begin{theo}
Let $[\omega]_x=3H-x([E_1]+[E_2]+[E_3])$ on $\mathbb{CP}^2 \sharp 3 \overline{\mathbb{CP}^2}$. Suppose the initial metric $\omega_0\in [\omega]$ is invariant under the toric action and the action of $\mathbb{Z}_3$ and
\[
\int_MR^2dg_0<\cB([\omega]_x),
\] 
then the Calabi flow exists for all time with uniformly bounded curvature tensor. Moreover, it converges to a cscK metric in the class $[\omega]_x$ in the Cheeger-Gromov sense. 
\end{theo}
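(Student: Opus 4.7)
The plan is to parallel the proof of Theorem \ref{T-2}, upgrading each step so that it works uniformly for every $x\in(0,3/2)$.

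First I would repeat the Sobolev bound of Lemma 6.1 in the wider class. The hypothesis $\int_M R^2\,dg_0<\cB([\omega]_x)$ rewrites, via \eqref{A-E-5}, as
\[
\int_M (R-\underline R)^2\,dg_0 < 96\pi^2c_1^2-2\int_M R^2\,dg_0,
\]
which by the Tian--Viaclovsky inequality \eqref{6-8} gives $Y_{[g_0]}>|R_{g_0}-\underline R|_{L^2}$. Since the Yamabe constant is conformally invariant and the Calabi energy is monotone decreasing under the flow, the same manipulations as in Lemma 6.1 yield an a priori upper bound on the Sobolev constants $C_s(g(t))$ in terms of $\underline R\sqrt{\mathrm{Vol}}$ and the initial Calabi energy.

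With Sobolev constants bounded, any curvature blowup produces by Theorem \ref{1} a maximal bubble $(M_\infty,g_\infty,J_\infty)$ which is a scalar flat ALE K\"ahler surface with finite $L^2$ curvature; by Lemma \ref{L-5-11}, $b_2(M_\infty)\le 3$. I would then invoke the $\mathbb{Z}_3$-symmetry dichotomy exactly as in Section 6: either three disjoint copies of the bubble form in the re-scaling, in which case \eqref{6-12}--\eqref{6-13} combined with the Gauss--Bonnet and signature identities \eqref{6-3}--\eqref{6-4} rule out the scenario directly; or else $(M_\infty,g_\infty,J_\infty)$ is itself $\mathbb{Z}_3$-invariant, in which case every holomorphic sphere $S\subset M_\infty$ gives rise to a sequence of $\mathbb{Z}_3$-invariant embedded $2$-spheres $S_i\subset M$ with $[\omega]_x\cdot[S_i]\to 0$.

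The case analysis that rules out these $(M_\infty,g_\infty)$ is the heart of the proof and I expect it to be the main obstacle. Writing $[S_i]=m_iH-n_i([E_1]+[E_2]+[E_3])$, the Lagrangian condition reads $m_i-x n_i\to 0$ and the self-intersection reads $m_i^2-3n_i^2=-k$ for fixed $k$. Any infinite sequence of Pell solutions satisfies $|m/n|\to\sqrt{3}$, but $\sqrt{3}\notin(0,3/2)$, so the family $[S_i]$ is eventually constant and the Lagrangian identity is exact: $m=xn$. For irrational $x$ this forces $[S]=0$, a contradiction; for $x=p/q$ in lowest terms, $[S]=j(pH-q\sum E_i)$ and the self-intersection equals $k=j^2(3q^2-p^2)\ge 3q^2-p^2$. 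Parallel arguments in the $b_2=2$ and $b_2=3$ cases produce analogous lower bounds on the integers $(k,l)$ and $(i,j,k)$ appearing in \eqref{5-2}--\eqref{5-3}. The complementary upper bounds come from \eqref{6-5}--\eqref{6-6} combined with the Gauss--Bonnet and signature identities and the assumption $\int_M R^2\,dg_0<\cB([\omega]_x)$, which yield upper bounds on $\int_{M_\infty}|Ric_0|^2\,dg_\infty$ and $\int_{M_\infty}|W_-|^2\,dg_\infty$ depending only on $x$. Checking that these lower and upper bounds are numerically incompatible for every rational $x\in(0,3/2)$ is the delicate step; the calculation carried out for $x=1/2$ in Section 6 is the template, but small denominators such as $x=1$ (where $3q^2-p^2=2$) will require the most careful bookkeeping.

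Once bubbles are ruled out, the Calabi flow has uniformly bounded curvature, exists for all time, and the standard subsequential argument of Section 4 produces a Cheeger--Gromov limit $(M,g_\infty,J_\infty)$ which is extK. The $\mathbb{Z}_3$-invariance persists along the flow and passes to the limit; because the $2$-torus of holomorphic vector fields on $\mathbb{CP}^2\sharp 3\overline{\mathbb{CP}^2}$ carries no nonzero $\mathbb{Z}_3$-fixed vector, the extremal vector field of $g_\infty$ must vanish, so $g_\infty$ is cscK. Finally, as in the last step of the proof of Theorem \ref{T-2}, the curves $H,E_1,E_2,E_3$ are each fixed by a circle subgroup of the $2$-torus, hence totally geodesic; their images in the limit are holomorphic curves in $(M,J_\infty)$ with the same intersection pattern, and blowing down the three $(-1)$-spheres and applying the Enriques--Kodaira classification identifies $(M,J_\infty)$ biholomorphically with $(M,J)$, completing the proof.
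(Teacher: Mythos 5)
Your outline follows the same route as the paper's own (very brief) treatment of this theorem: the paper proves it only by remarking that "the strategy is similar" to Theorem \ref{T-2} and deferring all details to an older version. Your Sobolev step via the generalized Tian cone condition, the appeal to Theorem \ref{1} and the $\mathbb{Z}_3$ dichotomy, and the final identification of $(M,J_\infty)$ with $(M,J)$ all match the template of Section 6. Your Pell-type observation --- that an unbounded family of solutions of $m^2-3n^2=-k$ forces $m/n\to\sqrt{3}\notin(0,3/2)$, so the Lagrangian condition becomes the exact identity $m=xn$ --- is a clean and slightly sharper packaging of the inequalities used in Cases 1--3 for $x=1/2$.

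The genuine gap is precisely the step you dismiss as "careful bookkeeping": for general $x$ the lower and upper bounds you list are \emph{not} always incompatible, so what is needed there is a new idea rather than more care. Concretely, take $x=1$, so $\cB([\omega]_1)=256\pi^2$, and consider the class $[S]=H-E_1-E_2-E_3$: it is $\mathbb{Z}_3$-invariant, satisfies $[\omega]_1\cdot[S]=0$ exactly, and has $[S]^2=-2$. The corresponding $b_2(M_\infty)=1$, $k=2$ bubble (Eguchi--Hanson) has $\int_{M_\infty}|Ric_0|^2\,dg_\infty=0$ and $\int_{M_\infty}|W_-|^2\,dg_\infty=12\pi^2$ by \eqref{5-1} and \eqref{6-3}, while the hypothesis $\int_MR^2\,dg_0<256\pi^2$ only yields $\int_M|Ric_0|^2\,dg<16\pi^2$ and $\int_M|W_-|^2\,dg<35\pi^2$ from the Gauss--Bonnet and signature formulas on $M$; moreover the three-copies alternative is excluded because $3\cdot12\pi^2>35\pi^2$, so the bubble would indeed be $\mathbb{Z}_3$-invariant, and then neither \eqref{6-5}, nor \eqref{6-6}, nor your number-theoretic argument produces a contradiction. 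Thus at least for such "small denominator" values of $x$ the scheme as you describe it does not close, and one must invoke additional structure --- for instance a finer use of the toric classification of Section 5, or the fact that $H-E_1-E_2-E_3$ is not represented by a chain of torus-invariant holomorphic spheres --- none of which appears in your outline (nor, to be fair, in the paper's own two-sentence proof).
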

 
The strategy is similar as in the proof of Theorem \ref{T-2}.
All details can be found in the old version of our paper.
Note when $x=1$, the existence of K\"ahler-Einstein metric was shown by Siu \cite{Siu} and Tian-Yau \cite{TY}.
Recently Arezzo-Pacard \cite{AP} considered the existence of cscK metrics on blown ups. On $\mathbb{CP}^2\sharp 3\overline{\mathbb{CP}^2}$, their results imply  the existence of cscK metrics in $[\omega]_x$ for $x$ sufficiently close to $0$ or $3/2$.

\noindent Xiuxiong CHEN\\
xxchen@math.wisc.edu\\
Department of Mathematics\\
University of Wisconsin-Madison\\

\noindent Weiyong HE\\
whe@math.ubc.ca\\
Department of Mathematics\\
University of British Columbia\\
\noindent Current address:\\
whe@uoregon.edu\\
Department of Mathematics\\
University of Oregon

\end{document}